\subjclass{Primary: 55N45, 57N65, 55R70; Secondary: 57R85, 57Q45, 57R40, 57R91}
\newtheorem{thm}{Theorem}[section]  
\newtheorem*{un-no-thm}{Theorem}
\newtheorem{cor}[thm]{Corollary}     % Numbered along with thm
\newtheorem{lem}[thm]{Lemma}         % Numbered along with thm
\newtheorem{prop}[thm]{Proposition}  
\newtheorem{cl}[thm]{Claim}
\newtheorem{conjecture}[thm]{Conjecture}
\newtheorem{bigthm}{Theorem}
\newtheorem{bigcor}[bigthm]{Corollary}
\newtheorem{bigadd}[bigthm]{Addendum}
\newtheorem{bigconj}[bigthm]{Conjecture}
\theoremstyle{definition}
\newtheorem{defn}[thm]{Definition}   % Numbered along with thm
\theoremstyle{definition}
\theoremstyle{definition}
\theoremstyle{remark}
\newtheorem{rem}[thm]{Remark}
\newtheorem{rems}[thm]{Remarks}
\newtheorem{term}[thm]{Terminology}
\newtheorem{hypo}[thm]{Hypothesis}
\newtheorem{notation}[thm]{Notation}
\newtheorem*{acks}{Acknowledgements}
\newtheorem*{out}{Outline}
\newtheorem*{intro-rem}{Remark}
\newtheorem*{intro-rems}{Remarks}
\newtheorem{ex}[thm]{Example}
\DeclareMathOperator*{\holim}{holim}
\DeclareMathOperator*{\hocolim}{hocolim}
\DeclareMathOperator*{\colim}{colim}
\DeclareMathOperator{\emb}{emb}
\begin{document}
\title[Homotopical Intersection Theory, III] {Homotopical Intersection Theory, III: multi-relative intersection problems}
\date{\today} 
\author{John R. Klein} 
\address{Wayne State University,
Detroit, MI 48202} 
\email{klein@math.wayne.edu} 
\author{Bruce Williams} 
\address{University of Notre Dame, Notre Dame, IN 46556}
\email{williams.4@nd.edu}
\begin{abstract} This paper extends some results of
Hatcher and Quinn \cite{H-Q} beyond the metastable range.
We give a bordism theoretic obstruction $\chi(f)$ to deforming
a map $f\colon\! P \to N$ between manifolds simultaneously off of a collection
of pairwise disjoint submanifolds $Q_1,...,Q_j \subset N$
under the assumption that $f$ can be deformed off of any proper subcollection in a homotopy coherent way. In a certain range of dimensions, $\chi(f)$ is a complete obstruction to finding the desired deformation. We apply this machinery 
to embedding problems and to the study of linking phenomena.
\end{abstract}
\thanks{The first author was partially supported by the 
National Science Foundation and the Simons Foundation.}
\maketitle%\pagestyle{fancy}
\setlength{\parindent}{15pt}
\setlength{\parskip}{1pt plus 0pt minus 1pt}
\def\Top{\bold T\bold o \bold p}
\def\wTop{\text{\rm w}\bold T}
\def\wT{\text{\rm w}\bold T}
\def\vo{\varOmega}
\def\vs{\varSigma}
\def\smsh{\wedge}
\def\flush{\flushpar}
\def\dbslash{/\!\! /}
\def\:{\colon\!}
\def\Bbb{\mathbb}
\def\bold{\mathbf}
\def\cal{\mathcal}
\def\orb{\cal O}
\def\hoP{\text{\rm ho}P}

\setcounter{tocdepth}{1}
\tableofcontents
\addcontentsline{file}{sec_unit}{entry}
%\endtableofcontents

\section{Introduction \label{intro}}

\subsection{Intersection problems}
In \cite{KW1} we considered the problem of deforming a map
$f\: P \to N$ between compact smooth manifolds off a compact smooth submanifold 
$Q\subset N$. This was called an {\it intersection problem}.
We obtained an obstruction $\chi(f)$ residing in a normal
bordism group $\Omega_0(X;\xi)$. The vanishing of the obstruction
is necessary for finding
such a deformation. One of the main results of 
\cite{KW1} was that in a certain metastable range of dimensions, $\chi(f)$ 
is a complete obstruction to finding a homotopy from $f$ to a map having
disjoint image from $Q$. 
The goal of the current paper is to extend these ideas  to the multi-relative setting.

Fix  a positive integer $j$ and
let 
\[
Q_1,\dots, Q_j \subset N
\] 
be a collection of pairwise
disjoint  closed smooth submanifolds of a compact connected smooth manifold $N$.
Given a map $P \to N$, where $P$ is a closed manifold,
the problem we consider is that of finding a deformation of $f$ off of the
$Q_i$ {\it simultaneously.}
We approach this inductively, by assuming that $P$ can be deformed off of
any proper union of the $Q_i$ in such a way that the deformations line up 
in a certain homotopically coherent fashion. We first explain what this precisely means.

Recall that a $(k+1)$-ad of spaces consists of
a space $X$ together with $k$ distinguished 
subspaces $X_1,\dots X_k \subset X$. The notation
for such data is $(X;X_1,\dots, X_k)$, but it will often be 
convenient to  simply write $X$ 
when the subspaces are understood.  

\begin{ex} (1). A space $Z$ can be
considered as a constant $(k+1)$-ad, i.e., $(Z;Z,\dots,Z)$.
\smallskip

\noindent (2). The standard $(k-1)$-simplex $\Delta^{k-1}$ together with its
codimension one faces is a $(k+1)$-ad, i.e., $(\Delta^{k-1};d_0\Delta^{k-1},\dots,
d_{k-1}\Delta^{k-1})$. 
\smallskip

\noindent (3).  If $Z$ is a space and $X$ is a $(k+1)$-ad, then
the cartesian product $Z \times X$ is a $(k+1)$-ad in the evident way.
\end{ex}

A map of $(k+1)$-ads $X \to Y$
is a continuous map of underlying spaces
which restricts to maps $X_i \to Y_i$ for all $i$.
We can topologize this as the subspace of the 
mapping space of all maps from $X$
to $Y$ in the compact-open topology. 

Consider $N$ together with the subspaces $N\setminus Q_1,\dots, N\setminus Q_j$
as a $(j+1)$-ad: $(N;N\setminus Q_1,\dots,N\setminus Q_j)$.  
Then  a 
{\it multi-relative intersection problem} is defined to be a map of $(j+1)$-ads 
\[
 f\: P\times \Delta^{j-1} \to N\, .
\]
Set $Q_J = Q_1 \amalg \cdots \amalg Q_j$.
We will consider $N \setminus Q_J$ as a constant $(j+1)$-ad; it
is then a sub-ad of $(N;N \setminus Q_1,\dots,N\setminus Q_j)$.
We define a {\it solution} to a multi-relative intersection problem 
to be a homotopy (of maps of 
$(j+1)$-ads) $f_t$  from $f = f_0$
to an ad map $f_1\: P\times \Delta^{j-1} \to N$ which factors 
as
\[
P \times \Delta^{j-1} @>>> N\setminus Q_J @>\subset >> N
\]
In particular, 
the image of $ f_1$ is disjoint from $Q_J$.

In more modern language the problem can be reformulated as
follows: let $J = \{1,\dots,j\}$. For $S \subset J$, let
\[
Q_S = \amalg_{i\in S} Q_i  \, .
\]
Then  a multi-relative intersection problem is equivalent to specifying 
a map
\begin{equation} \label{eqn:f-second}
f\:P \to \holim_{S \subsetneq J} (N\setminus Q_S) \, ,
\end{equation}
where the target is the homotopy inverse limit of the spaces
$N\setminus Q_S$ as $S$ ranges through the proper subsets of $J$.
Explicitly, the displayed homotopy limit is given by the space of maps of $(j+1)$-ads
$\Delta^{j-1} \to N$.

The deliberate ambiguity in our notation
is for the sake of convenience: we use
$f$ to denote the map \eqref{eqn:f-second} as well as for the map of ads
$P\times \Delta^{j-1} \to N$, as this is not likely to cause confusion 
(note: these maps determine each other by  an adjunction).

A solution then amounts to a map $\hat f\: P \to N\setminus Q_J$ together with 
a commuting homotopy $f_t\: P \to  \holim\limits_{S \subsetneq J} \, (N\setminus Q_S)$, 
$t\in [0,1]$,
for the diagram
\begin{equation} \label{eqn:lifting-problem}
\xymatrix{
& N\setminus Q_J \ar[d] \\
P \ar[r]_(.25){f} \ar[ur]^{\hat f} &
\holim\limits_{S \subsetneq J} \, (N\setminus Q_S)\, . \\
}
\end{equation}

Given a map of $(j+1)$-ads $f\: P \times \Delta^{j-1} \to N$ as above, we write
\[
E(P,Q_\bullet) 
\]
for the {\it iterated homotopy fiber product} of $P\times \Delta^{j-1}$ and
each of the $Q_i$ over $N$. This is just the homotopy pullback of 
the diagram 
\[
\CD
P\times \Delta^{j-1} \times  \prod_{i=1}^j Q_i @>>> \prod_{i=0}^{j} N @<\Delta << N
\endCD
\]
where $\Delta$ is the diagonal map, and the left  map
is the product of the map $f\: P \times \Delta^{j-1} \to N$ with the inclusions of the $Q_i$.

Define a virtual bundle $\xi$ over $E(P,Q_\bullet)$ as follows:
Let $\tau_P$ be the tangent bundle of $P$, $\tau_N$ the tangent
bundle of $N$ and $\tau_{Q_i}$ the tangent bundle of $Q_i$;
each one of these gives a bundle over $E(P,Q_\bullet)$
using the evident (projection) maps. To avoid notational clutter,
we use the same notation for these pullbacks. Then we set
\[
\xi := -\tau_P +  \sum_{i=1}^j(\tau_N- \tau_{Q_i}) \, .
\]
Suppose $p = \dim P$, $q_i = \dim Q_i$ and $n = \dim N$.
It will also be convenient to write 
\[
\mu = \min_i (n-q_i-2) \quad \text{and } \quad
\Sigma = \sum_i (n-q_i-2)\, .
\] In particular, the virtual
rank of $\xi$ is $2j-p+\Sigma$.  The following assumption will
be made throughout the paper.

\begin{hypo} For $1\le i \le j$, we have
$n-q_i \ge 2$.
\end{hypo}

We briefly review  the definition of
bordism with coefficients in a virtual bundle. Let $X$ be a space equipped
with a finite dimensional inner product bundle $\zeta$ of rank $s$. 
Then one has the Thom space $X^\zeta$ which is the
quotient space formed from the unit disk bundle by collapsing the unit sphere bundle to a
point. For the purposes of this paper, we define $\Omega_k(X;\zeta)$ to be the $k$-th stable homotopy group $\pi^{\text{\rm st}}_k(X^\zeta)$.  
By standard transversality arguments, an element of this abelian group is
represented by a compact smooth submanifold $V \subset \Bbb R^{k+d}$, for 
some $d \ge 0$,  together with a map
$g\:V \to X$ such that the pullback of $\zeta\oplus \epsilon^d$ along $g$  is identified with the normal bundle of $V$ (where $\epsilon^d$ is the trivial bundle of rank $d$; note that the dimension of $V$ is necessarily $k -s$). 
Then bordism defines an equivalence relation on this collection and the set of
equivalence classes is canonically identified with $\Omega_k(X;\zeta)$. With respect to this identification, note that the operation of disjoint union of bordism classes corresponds to the addition of stable homotopy classes. Now suppose that $\zeta$ is a virtual bundle. This means that $\zeta \oplus \epsilon^j$ comes equipped with an isomorphism to a finite dimensional inner product bundle $\eta$, for some  integer $j \ge 0$. In this instance, we define
$\Omega_k(X;\zeta)$ to be  $\Omega_{k+j}(X;\eta)$.  Our indexing convention for
the bordism group differs from that of \cite{KW1}, but is the same as the one used
in \cite{KW2}.

\begin{bigthm} \label{main-thm} Assume $j \ge 1$.
Then there is an obstruction
\[
\chi(f) \in \bigoplus_{(j-1)!} \Omega_{2j-2}(E(P,Q_\bullet);\xi)
\]
which vanishes if the intersection problem defined by $f$ possesses a solution.
Conversely, if
\[
p\le 1+\mu +\Sigma
\]
 then the vanishing
of $\chi(f)$ guarantees the existence of a solution. 
\end{bigthm}

Theorem \ref{main-thm} is proved using a fiberwise version of Poincar\'e duality
together with some general results about strongly cocartesian 
cubes.

\begin{rem} The $j=1$ case (``the metastable range'') of Theorem \ref{main-thm}
was already considered in \cite{KW1}. That work gave
a homotopy theoretic approach to the main results of the paper of Hatcher and Quinn
\cite{H-Q} (when $j=1$, Theorem \ref{embedding-result} below amounts to the vanishing
obstruction case of \cite[th.\ 2.2]{H-Q}). 
\end{rem}

\begin{rem}
The obstruction $\chi(f)$ is defined in a homotopy theoretic manner. Given
the identification between bordism theory
and the homotopy groups of a Thom spectrum,
it is reasonable to ask what $\chi(f)$ means geometrically.
In the $j=1$ case such an interpretation was provided by the ``Index Theorem'' of
\cite[th.~12.1]{KW1}.  The $j > 1$ case is more subtle and involves iterated intersections of null-bordism data. We hope to address this in detail another paper. 
Meanwhile, to leave the reader with an impression, 
we now sketch a  geometric description of $\chi(f)$ when $j=2$.

Let $j=2$ and let $f\:P \times \Delta^1 \to N$ be an intersection problem.
Let $b$ be the barycenter of $\Delta^1$ and let $D_i$ be 
the transversal intersection of $f_{|P \times b}\: P\times b\to N$ with $Q_i$.  By assumption, the evident maps $D_i \to E(P,Q_i)$ are null bordant. 
Let $g_i \: W_i \to E(P,Q_i)$ be a null-bordism.  
Compose this with the projection $E(P,Q_i) \to P$ to get maps $h_i \: W_i \to P$.
Now
take the transversal intersection of the product map
${h_1\times h_2} \: W_1 \times W_2 \to P\times P$
with the diagonal of $P$. This produces a closed manifold 
$W_{12}$ of dimension $p-2-\Sigma$ equipped with a map $W_{12} \to E(P,Q_\bullet)$ which is covered by 
the requisite bundle data. The associated bordism class 
coincides with the obstruction $\chi(f)$. 
\end{rem}

\begin{rem}[Large Codimension]\label{rem:large-codim} 
If $p \le 1+\Sigma$, then the bordism group
of Theorem \ref{main-thm} is trivial.
Consequently, $f$ can be homotopy factorized
through $N\setminus Q_J$ in this case.

If $j=1$, this
conclusion also follows from transversality,
and for $j >1$ it follows from the
higher Blakers-Massey  theorem applied to the $j$-cubical diagram
$\{N\setminus Q_S\}_{S\subset J}$ (cf.\ \cite[thm.~2.5]{Good}).
\end{rem}

\subsubsection{Highly connected manifolds}  
When the manifolds $P$ and $Q_i$ are sufficiently highly connected, the obstruction group
of Theorem \ref{main-thm} admits a simpler description.
Suppose that $P$ is $a$-connected and $Q_i$ is $b_i$-connected.  Choose basepoints
in $x\in P$ and $y_i \in Q_i$. Then $x$ gives rise to a point $x' \in N$ using $f$.
The homotopy fiber product of 
$E(x,y_\bullet)$ is defined and comes equipped with a map
$E(x,y_\bullet) \to E(P,Q_\bullet)$.  Moreover, the pullback of $\xi$ to
$E(x,y_\bullet)$ is a trivial virtual bundle of rank $2j-p+\Sigma$.
Hence the bordism groups associated with this pullback are framed
bordism groups of $E(x,y_\bullet)$ shifted in degree by $2j-p+\Sigma$.

It is also straightforward to check that
the map $E(x,y_\bullet) \to E(P,Q_\bullet)$ is $\min(a,b_1,\dots,b_j)$-connected.
It follows that the associated map of Thom spectra is 
$k$-connected, where $k = \min(a,b_1,\dots,b_j)+ 2j - p+\Sigma$. In particular
the induced homomorphism of bordism groups is an isomorphism in degrees
strictly less than $k$. 

Note that $E(x,y_\bullet)$ is the space of $j$-tuples
$(\lambda_1,\dots,\lambda_j)$ in which $\lambda_i\: [0,1] \to N$ is a path
from $x'$ to $y_i$ for $1\le i \le  j$.  The $j$-fold cartesian product of loop spaces
 $\prod_{j}\Omega N$
based at $x'$ acts on $E(x,y_\bullet)$ by path composition.
After a basepoint for $E(x,y_\bullet)$ 
is fixed, we obtain a homotopy equivalence $E(x,y_\bullet) \simeq \prod_{j}\Omega N$.
Consequently, we have shown

\begin{bigadd} \label{bigadd:highly-connected} Assume $p \le 1 + \Sigma + \min(a,b_1,\dots,b_j)$. 
Then the obstruction group appearing in Theorem \ref{main-thm} is isomorphic
to the direct sum of framed bordism groups
\[
\bigoplus_{(j-1)!}\Omega^{\text{\rm fr}}_{p-2-\Sigma}(\textstyle \prod\limits_j  \Omega N)\, .
\]
\end{bigadd}

\begin{ex} Suppose $P = S^p$ and $Q_i = S^{q_i}$ are spheres. Then 
$a = p-1$ and $b_i = q_i -1$. Consequently, the inequality appearing
in Addendum \ref{bigadd:highly-connected} becomes $p \le \Sigma + \mu - j$.
\end{ex}

\begin{ex} Suppose $p = 2 + \Sigma$ and $a,b_i \ge 1$.  Then 
the obstruction group of Addendum \ref{bigadd:highly-connected} is 
isomorphic to $\oplus_{(j-1)!}\Bbb Z[\pi]^{\otimes j}$, with $\pi = \pi_1(N)$.
\end{ex}

\subsection{The solution space} The space of lifts 
solving the multi-relative intersection problem
\eqref{eqn:lifting-problem} is defined by converting the vertical map appearing in that diagram
into a fibration and then taking the space of sections of this fibration
along $P$. The space of such lifts is called the {\it solution space} and is denoted by
$\cal L(f)$.

For a spectrum $E$ we let $\Omega^\infty E$ be the associated infinite loop space.

\begin{bigthm}\label{main:solution-space}
Assume that in the solution space $\cal L(f)$ is non-empty
and is equipped with a choice of basepoint. Then 
there is 
a $(1-p+\mu + \Sigma)$-connected  map
\[
\cal L(f) \to \prod_{(j-1)!} \Omega^{\infty} E(P,Q_\bullet)^{\xi+(1-2j)\epsilon}\, .
\]
\end{bigthm}

\subsection{Families of embeddings}
A variant of the multi-relative intersection 
problem involves
families of smooth
embeddings. In this instance one is given a map of $(j+1)$-ads 
$f\: P \times \Delta^{j-1}\to N$ which is also a $(j-1)$-parameter family of smooth embeddings
from $P$ to $N$. The solution of the problem in this case is to find a deformation
of ad-maps, this time through an isotopy, to 
a $(j-1)$-parameter family of embeddings having image disjoint from $Q_{J}$. 

By combining Theorem \ref{main-thm} 
with \cite[thm.~E]{GK2}, we
obtain

\begin{bigthm}[Multiple Disjunction] \label{embedding-result}  Assume $p,q_i \le n-3$ and $p \le 1 + \min(n-p-2,\mu)+\Sigma$.
Then $\chi(f) =0$  if and only if the multi-relative intersection problem 
of embeddings has a solution.
\end{bigthm}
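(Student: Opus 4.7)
\emph{Necessity} is immediate. An isotopy through ad-maps that are families of embeddings, ending in $N - Q_{\cal J}$, is in particular a homotopy of $(j+1)$-ads of the kind considered in Theorem~\ref{main-thm}, so the latter forces $\chi(f) = 0$.

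For \emph{sufficiency}, suppose $\chi(f) = 0$. Our hypothesis includes $p \le 1 + \mu + \Sigma$, so Theorem~\ref{main-thm} applies and produces a homotopy $H\: P \times \Delta^{j-1} \times I \to N$ of $(j+1)$-ads from $f$ to some $g$ whose image avoids $Q_{\cal J}$. The remaining task is to convert $H$ into a $1$-parameter family of ad-maps that are themselves $(j-1)$-parameter families of smooth embeddings into $N$. To organize this, I introduce, for each $S \subseteq \cal J$, the spaces $\cal M_S := \maps(P, N - Q_S)$ and $\cal E_S := \emb(P, N - Q_S)$. Both assemble into $(j+1)$-cubes indexed by subsets of $\cal J$, and the data for the embedding intersection problem specifies a point in $\holim_{S \subsetneq \cal J} \cal E_S$ whose image in $\holim_{S \subsetneq \cal J} \cal M_S$ is $H(\cdot,0)$. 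The homotopy $H$ exhibits a path in the homotopy fiber of $\cal M_\emptyset \to \holim_{S \subsetneq \cal J} \cal M_S$ from that point to one that lifts to $\cal M_{\cal J}$; I must produce the analogous path for $\cal E$.

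The main technical input is a multi-disjunction estimate. Since $p, q_i \le n-3$, each pair $(\cal M_S, \cal E_S)$ is highly connected in the range of Haefliger's embedding theorem, and the $(j+1)$-cube $\cal E_\bullet$ is Cartesian up to a controllable defect obtained by summing the individual disjunction connectivities over the face lattice. Once these defects are accumulated, the forgetful map of total homotopy fibers
$$
\mathrm{hofib}\bigl(\cal E_\emptyset \to \holim_{S\subsetneq \cal J}\cal E_S\bigr) \longrightarrow \mathrm{hofib}\bigl(\cal M_\emptyset \to \holim_{S\subsetneq \cal J}\cal M_S\bigr)
$$
becomes $0$-connected precisely in the range $p \le 1 + (n-p-2) + \Sigma$. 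Under this estimate, the path coming from $H$ lifts to a path in the embedding fiber, which, unwound, is the desired isotopy through ad-embeddings. The \textbf{main obstacle} is establishing this $0$-connectedness uniformly over the cube: the individual pair estimates are classical, but the multi-relative bookkeeping must be done carefully to verify that the stated bound $p \le 1 + \min(n-p-2,\mu) + \Sigma$ is exactly what makes both the hypothesis of Theorem~\ref{main-thm} and this disjunction estimate hold simultaneously. Granted that connectivity, lifting $H$ is a formal matter.
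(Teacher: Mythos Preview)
Your architecture is the paper's: apply Theorem~\ref{main-thm} on the mapping-space side, then transfer to embeddings via a cartesian estimate on the $(j{+}1)$-cube $\cal E_\bullet \to \cal M_\bullet$. Two corrections.

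First, an indexing slip: the total homotopy fiber you want is that of $\cal E_{\cal J} \to \holim_{S \subsetneq \cal J}\cal E_S$, not of $\cal E_\emptyset$; the vertex $\cal E_\emptyset = \emb(P,N)$ is one of the terms \emph{inside} the holim, and the lift you seek lands in $\cal E_{\cal J} = \emb(P, N-Q_{\cal J})$.

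Second, and this is the substantive point: the multi-disjunction input you need is precisely that the $(j{+}1)$-cube $\emb(P,N-Q_\bullet) \to \maps(P,N-Q_\bullet)$ is $(n-2p-1+\Sigma)$-cartesian. The paper does not derive this; it is imported wholesale from Goodwillie--Klein \cite{GK2} (see \S\ref{disj}). Your description---``highly connected in the range of Haefliger's embedding theorem'' plus ``summing the individual disjunction connectivities over the face lattice''---is not how this theorem is proved, and you should not expect to recover it by bookkeeping from single-pair estimates. Multiple disjunction for embeddings is a genuinely hard theorem, not a formal consequence of the $j=1$ case. Once you accept it as a black box, the numerics work exactly as you say: $(n-2p-1+\Sigma)$-cartesian gives $0$-connectedness of the relevant comparison map when $p \le 1 + (n-p-2) + \Sigma$, and combining with the $p \le 1 + \mu + \Sigma$ needed for Theorem~\ref{main-thm} yields the stated bound.
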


\subsection{The embedding tower} For  a smooth manifold $P$
of dimension $p$ without boundary and a smooth manifold $N$ of dimension $n$, possibly with boundary, 
let $E(P,N)$ denote the space of smooth embeddings. When $P$ is closed, Weiss \cite{Weiss}
exhibits a tower of fibrations
 \[
 \cdots \to E_2(P,N) \to E_1(P,N)
 \]
 and compatible maps $E(P,N) \to E_k(P,N)$. 
 Up to homotopy, the $j$-th layer of 
 the tower is given by the space of compactly supported
 global sections of a certain fibration over the configuration space 
 $\tbinom{P}{j}$, the latter given by the space of subsets of $P$ 
 having cardinality $j$.  The space $E_j(P,N)$ is in some sense the best approximation to $E(P,N)$ obtained from spaces of embeddings
 $E(U,N)$ as $U$ ranges throughout the open subsets of $P$ that are diffeomorphic to a disjoint union of at most $j$ open balls.
  In what follows, we assume that $P$ is compact.
 
If $p\le n-1$, then
 $E_1(P,N)$ has the homotopy type of the space of immersions of $P$ in $N$. If $p\le n-3$, then  
the map
 \[
E(P,N)\to \lim_{j\to \infty} E_j(P,N)
 \]
 is a homotopy equivalence \cite{GW},\cite{GK2}. 
 The above motivates the following question: given a point of some stage of the tower, say $E_{j-1}(P,N)$, what are the obstructions to lifting the given point to the embedding space? 
 If $j = 2$, the work of Haefliger \cite{Haefliger}, Dax \cite{Dax}, Salomonsen \cite{Salo} and Hatcher-Quinn \cite{H-Q} provide answers to this question in the metastable range (for the discussion of this case in the context of the tower, see \cite[\S4]{Weiss}). 
 
It will be convenient to consider the following
modification of this problem. 
Fix a basepoint of $E_1(P,N)$, i.e., an immersion. Let
 $\bar E_j(P,N)$ be the fiber of  $E_j(P,N) \to E_1(P,N)$.
 Then the tower 
 \[
\cdots  \to \bar E_2(P,N) \to \bar E_1(P,N) = \ast
 \]
converges to $\bar E(P,N) = \text{fiber}(E(P,N) \to E_1(P,N))$. 
Furthermore, the layers of this tower for $j > 1$ coincide with the layers of the embedding tower.
 
Recall that $J = \{1,\dots,j\}$.
In \S\ref{sec:speculation}
we construct a spectrum with $\Sigma_j$-action
 $\cal C_J$ over the configuration space $E_J(P) := E(J,P)$, which
depends only  on the data $P$, $N$ and $j$. Let $\tau$
be  the tangent bundle of $E_J(P)$ 
(i.e., restriction of the cartesian product 
$j$-copies of the tangent bundle of $P$). Then we can twist
$\cal C_J$ by $-\tau$ to obtain a fiberwise spectrum with
$\Sigma_j$-action
${}^{-\tau}\cal C_J$ over $E_J(P)$. In particular, one can speak
about the equivariant homology of $E_J(P)$ with coefficients
in ${}^{-\tau}\cal C_J$. 

We will define an invariant
 \[
 \mu\: \pi_0(\bar E_{j-1}(P,N)) \to 
 H^{\Sigma_j}_0(E_J(P);{}^{-\tau}\cal C_J)
 \] 
which vanishes on the image of $\pi_0(\bar E_{j}(P,N))$.

 \begin{bigthm} \label{bigthm:embedding-sequence} Assume $j \ge 2$ and $N$ is $r$-connected with $r\le n-2$. Assume additionally
 \[
 r  \, \ge\,  p-1 - (j-1)(n-p-2)\, .
 \]
If $x\in \bar E_{j-1}(P,N)$, then $\mu(x)=0$ implies
that $x$ lifts to $\bar E_j(P,N)$.
\end{bigthm}

If $N$ is contractible then we can take $r=n-2$. In this case
the displayed inequality $ r  \ge p-1 - (j-1)(n-p-2)$ is automatically satisfied:

\begin{bigcor} \label{bigcor:embedding-sequence} Assume $j \ge 2$ and that $N$ is contractible. If
$\mu(x)=0$, then $x \in \bar E_{j-1}(P,N)$ lifts to $\bar E_j(P,N)$.

\end{bigcor}

\begin{rem} By \cite{GW}, the map $\bar E(P,N) \to
\bar E_j(P,N)$ is $((j+1)(n-p-2) + 3-n)$ connected. Consequently,
in both Theorem \ref{bigthm:embedding-sequence} and Corollary \ref{bigcor:embedding-sequence}
if $\mu(x) = 0$ then $x\in \bar E_{j-1}(P,N)$ will lift to $\bar E(P,N)$ if in addition $(j+1)(n-p-2) + 3-n \ge 0$.
\end{rem}

\subsection{Link maps} 
Our main results can also be used to study higher order linking phenomena.
Given connected closed manifolds $P_1,\dots,P_j$ and a connected manifold $N$, 
a ($j$-component) {\it link map} is a continuous function
\[
f\: P_1 \amalg \cdots \amalg P_j \to N
\]
such that $f(P_i) \cap f(P_k) = \emptyset$ for $i \ne k$. The space of link maps will be denoted by $\cal L(\bold P,N)$.\footnote{The path components of $\cal L(\bold P,N)$ are called {\it link homotopy classes}. The latter is usually studied in the special case when $N = \Bbb R^n$ and the $P_i$ are spheres \cite{Milnor},
\cite{Massey}, \cite{Koschorke}.}
Fix an embedding
$J \to  N$, where we recall again that  $J = \{1,2,\dots, j\}$.
We will also identify $J$ with its image in $N$.

We define the {\it trivial
link map} to be the link map given by sending the component $P_i$ to $i \in J$, i.e., the trivial link map factors as the composition
$
P_1 \amalg \cdots \amalg P_j \to J \subset N
$,
where the first map is the canonical surjection 
from a space onto its set of components.
The trivial link map equips $\cal L(\bold P,N)$ with a basepoint. A link map is {\it trivializable}
if it admits a path to the trivial link map in the space of link maps.

\begin{defn} The space of
 {\it (homotopy coherent) Brunnian link maps} 
 \[
 \cal B(\bold P,N)
 \]
is the total homotopy fiber of 
the $j$-cube of based spaces
\[
S \mapsto \cal L^S(\bold P,N)
\]
where $\cal L^S(\bold P,N)$ is the space
of maps $f\: P_1 \amalg \cdots \amalg P_j \to N$ such that
for every $S\subset J$ the restriction 
\[
f_S := f_{|P_S}\: \textstyle\coprod_{i \in S} P_i \to N
\]
is an $|S|$-component link map.
\end{defn}

Since $\cal B(\bold P,N)$ is the homotopy fiber
of  the map 
\[
\cal L^J(\bold P,N) \to \holim_{S\subsetneq J}\cal L^J(\bold P,N)\, ,
\]
a point of $\cal B(\bold P,N)$ determines
a link map $f\in\cal L^J(\bold P,N)$ with the property that
any proper sub-link map is trivializable. In particular,  $f$ 
satisfies the classical Brunnian condition \cite{Milnor},\cite{Debrunner}.

Restricting now to the case when $N=\Bbb R^n$, 
we will construct
in  \S\ref{sec:linking} a {\it higher stable linking number map}\footnote{For link maps
of circles in three dimensional euclidean space, 
it seems likely that on path components, our map
coincides with Milnor's $\mu$-invariants \cite{Milnor}.}
\begin{equation} \label{eqn:lambda-intro}
 \lambda\:\cal B(\bold P,\Bbb R^n) @>>> \prod_{i=1}^{(j-2)!}
 F^{\text{st}}(\textstyle\prod_{i=1}^j P_i,S^{(j-1)(n-2)+1})\, ,
\end{equation}
where for an unbased space $X$ and a spectrum $E$,
$F^{\text{st}}(X,E)$ denotes the function space of stable maps
from $X$ to $E$ i.e., the function space $F(X,\Omega^\infty E)$.

A result of  
Goodwillie and Munson in the case $j=2$ \cite[th.~1.1]{GM}, suggests
to us the following:

\begin{bigconj} \label{bigconj:lambda} 
The map $\lambda$ is $(1+\Sigma')$-connected,
where
\[
\Sigma' = \sum_{i=1}^j (n-2p_i-2)\, .
\]
\end{bigconj}

\noindent (For variant forms of this statement see \S\ref{sec:linking}.)
We submit the following evidence for  Conjecture \ref{bigconj:lambda}:

\begin{bigthm}[Realization of higher linking numbers] \label{bigthm:realization} Assume that
$P_i$ embeds in $\Bbb R^n$ and $n-p_i\ge 2$ for $2\le i \le j$. 
Then the higher stable linking number map $\lambda$
induces a surjection on homotopy groups in degrees $\le 1 - \hat p + \Sigma$, where
\[
\hat p := \max_{2\le i\le j} p_i \quad \text{\rm and } \quad
\Sigma = \sum_{i=1}^j (n-p_i-2)\, .
\]
\end{bigthm}

In the above, we do not need to assume that the embeddings are pairwise disjoint.
Since $1 - \hat p + \Sigma \ge 1 + \Sigma'$, it follows that
$\lambda$ induces a surjection on homotopy groups in degrees
$\le 1 +\Sigma'$. Hence, Theorem \ref{bigthm:realization}
gives evidence for the validity of Conjecture \ref{bigconj:lambda}. 
Further evidence is contained in \S\ref{sec:linking}. 
Our results on link maps overlap with those of Munson 
\cite{Munson}. Our methods are homotopy theoretical, whereas
Munson relies on bordism and transversality. It seems 
likely to us that Theorem \ref{bigthm:realization} could  also be
extracted from  Munson's approach, possibly at the
expense of a dimension.

\begin{out} Section \ref{sec:setup} is a breezy exposition on the basic
definitions as well as the machinery 
used throughout the paper.
 Section \ref{sec:cocart} is about strongly cocartesian cubes of spaces, and the main technical results of the paper are stated there. 
Section \ref{sec:geometry} recasts the results of section \ref{sec:cocart} in the setting of
homotopical intersection theory to give a proof of Theorems \ref{main-thm}
and \ref{main:solution-space} modulo the proof of Theorem
 \ref{thm:also-hard}.  
In section \ref{sec:proofs} we prove Theorem \ref{thm:also-hard} which is one of our main technical results. In section \ref{sec:disj} we combine Theorem \ref{main-thm} with 
 \cite[thm.~E]{GK2} to obtain a multiple disjunction result for smooth embeddings. Section ref{sec:speculation} contains
 the proof of Theorem \ref{bigthm:embedding-sequence}. In section \ref{sec:linking} 
we apply our machinery to the study of spaces of
link maps.  
 \end{out}

\begin{acks} Bruce Williams passed away on January 11, 2018 before
the final revision of this paper was completed.  Bruce was a close friend and an inspiring mentor to the first author. 
 
We wish to thank the referee doing a thorough job.
The referee's suggestions entailed significant changes in both
 the exposition and the proofs
of many of the results. The first author is convinced that the payoff was worthwhile and he thinks that Bruce Williams would have agreed. The referee suggested to make the common theme of the different parts clearer. We hope that this version of the paper reflects the referee's suggestion. Perhaps a slogan like
``manifold applications of the higher Blakers-Massey theorem'' 
could serve as the leitmotif (pun intended).

 We are much indebted to Tom Goodwillie for explaining to us
 the idea of the proofs of  Lemmas \ref{lem:wedge-cube} and \ref{lem:GK-improved}. We 
 thank Brian Munson for explaining to us aspects of his paper 
 \cite{Munson} and we also thank Uwe Kaiser for helping us with the link homotopy literature. 
The initial version of the paper was partially written while the first author
visited the University of Copenhagen. He is indebted to Lars Hesselholt for providing him with support from the Bohr Professorship 
to finance his stay. The first author was supported by
Collaboration Grant 317496 from the Simons Foundation.
\end{acks}

\section{Language \label{sec:setup}} 
\subsection{Spaces}
Let $\cal T$ be the category of compactly generated spaces. Then $\cal T$ is a
Quillen model category in which the weak equivalences are the weak homotopy equivalences,
the fibrations are the Serre fibrations and the cofibrations are the retracts 
of relative cell complexes \cite[ch.~2, \S3]{Quillen} 
(a relative cell complex is a pair of spaces $(Y,A)$
such that $Y$ is obtained  from $A$ by attaching cells).
A space $X$ is $r$-{\it connected} if every map $S^k \to X$
for $k \le r$ is homotopic to a constant map; here $S^k$ is the sphere of dimension $k$. In particular,
the empty space is $(-2)$-connected and every non-empty
space is (at least) $(-1)$-connected.
A map $f\: X \to Y$ is 
$r$-connected if its homotopy fiber at any basepoint is $(r-1)$-connected.
An $\infty$-connected map is, by definition, a weak equivalence.

A commutative square of spaces
\begin{equation} \label{eqn:first-square}
 \xymatrix{
 A \ar[r] \ar[d] & C \ar[d] \\
B \ar[r] & D
 }
 \end{equation}
 is {\it $r$-cocartesian}  if the map 
 \[
 \hocolim (B\leftarrow A \to C) \to D
 \]
 is $r$-connected. 
  
 Dually, the square \eqref{eqn:first-square} is {\it $r$-cartesian}
 if the map
 \[
 A\to \hocolim (B\to D \leftarrow C) 
 \]
 is $r$-connected.

 \begin{defn} Let
  \begin{equation} \label{eqn:fibersequence}
  X\to Y \to Z
  \end{equation}
  be maps of spaces equipped  with a homotopy to a constant $z$.
One says that \eqref{eqn:fibersequence} is  a
{\it homotopy fiber sequence in degrees $\le s$}  
 if the induced map from $X$ to the homotopy fiber of $Y\to Z$ is $s$-connected. If  this condition holds for all integers $s$, then \eqref{eqn:fibersequence}
 is called a {\it homotopy fiber sequence}.
 
Dually, if the induced map from the homotopy cofiber of $X\to Y$ to 
 $Z$ is $s$-connected, then one says that \eqref{eqn:fibersequence} is a {\it homotopy cofiber sequence in degrees $\le s$} (respectively, a {\it homotopy cofiber sequence} if the condition holds for all $s$). 
  \end{defn}
 
When the square \eqref{eqn:first-square} is $\infty$-cocartesian and $C$ is contractible, then $A\to B \to D$ is a homotopy cofiber sequence once a contraction 
 $C \times [0,1] \to C$  
is specified. The dual case is analogous.

\subsection{Fiberwise spaces}
For an object $X\in \cal T$, we let $\cal T(X)$ denote the category of spaces over $X$.
This is the category whose objects are pairs $(Y,r)$ such that
$r\: Y \to X$ is a map. A morphism $(Y,r) \to (Y',r')$ is a map 
$f\: Y \to Y'$ such that $r'\circ f = r$. We more often than not suppress
the structure map $r\:Y \to X$ when specifying an object and write
$Y$ in place of $(Y,r)$. 

Similarly, let $\cal R(X)$ denote the
category of retractive spaces over $X$. This has objects $(Y,r,s)$
where $r\: Y \to X$ and $s\: X \to Y$ are maps such that $r\circ s$ is the identity map. A morphism $(Y,r,s) \to (Y',r',s')$ is a map $f\: Y \to Y'$ such that
$r'\circ f = r$ and $f\circ s =s'$.  Again, the structure maps are usually surpressed.

Note that the case $\cal R(*)$ gives the category of based spaces.
We sometimes regard objects of $\cal R(X)$ as objects of $\cal T(X)$ by means
of the forgetful functor. When $X = \ast$ we usually write 
$\cal T_\ast$ in place of $\cal R(\ast)$, i.e., the category of based spaces.

Both $\cal T(X)$ and $\cal R(X)$ have simplicial model category structures
where a weak equivalence (cofibration, fibration) in each case is a morphism whose
underlying map of spaces is a weak homotopy equivalence 
(cofibration, fibration) of spaces \cite[2.8, prop.\ 6]{Quillen}. In particular,
 the set of (fiberwise) homotopy classes $[Y,Z]_{\cal T(X)}$ is defined for objects
$Y,Z$ of $\cal T(X)$. Similarly, one can define homotopy classes in $\cal R(X)$. 
If $Y \in \cal T(X)$ is an object, let $Y^+\in \cal R(X)$ be the object given by
$Y \amalg X$ with evident structure maps. If $Z \in \cal R(X)$ is an object, then we
have $[Y^+,Z]_{\cal R(X)} = [Y,Z]_{\cal T(X)}$.
As usual, when defining homotopy classes $[Y,Z]_{\cal T(X)}$,
$Y$ is replaced by a cofibrant approximation and $Z$
is replaced by a fibrant approximation.  

 A morphism $Y \to Z$ in either $\cal T(X)$ or $\cal R(X)$ is said
 to be {\it $j$-connected} if and only if its underlying map in $\cal T$ is $j$-connected.  An object $Y$ is said to be $j$-connected
 if and only iff the structure map $Y \to X$ is $(j+1)$-connected.
 A commutative square in $\cal T(X)$ or $\cal R(X)$ is {\it $j$-cocartesian}
 ($j$-cartesian) if it is so when considered in $\cal T$ (here $j$ may be  
 $\infty$).

We say an object $Y$ of $\cal T(X)$ or $\cal R(X)$
 has dimension $\le s$ it is built up from the initial object by attaching cells of dimension at most $s$. In $\cal T(X)$ this means
 that the underlying space of $Y$
is a cell complex of dimension at most $s$. In $\cal R(X)$ it means
that the pair $(Y,X)$ is a relative cell complex of dimension at most $s$. In either case we write $\dim Y \le s$.

A sequence of maps  $A \to Y \to C$ in $\cal T(X)$ 
forms a {\it homotopy cofiber sequence} (respectively, in degrees $\le r$) if it comes equipped with a homotopy from 
 $A\to C$ to a composition of the form $A\to X \to C$ 
 (where $X$ is viewed as the terminal object) 
 such that the induced map
 from the homotopy cofiber of $A\to Y$ (i.e.,  the homotopy colimit
 of $X \leftarrow A\to Y$) to $C$
 is a weak equivalence (respectively $r$-connected).  The dual notion of homotopy fiber sequence (in degrees $\le r$) is defined analogously.

\begin{lem} \label{lem:excision} Suppose that $A \to Y \to C$ is a homotopy cofiber sequence
of $\cal T(X)$. Assume that $A$ is $r_1$-connected and $C$ is $r_2$-connected.
Then $A \to Y \to C$ is a homotopy fiber sequence in dimensions $\le r_1+r_2$.
\end{lem}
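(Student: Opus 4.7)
The strategy is to reduce to the classical Blakers-Massey excision theorem applied to the $\infty$-cocartesian square
\[
\xymatrix{
A \ar[r]^{f} \ar[d]_{g} & Y \ar[d]\\
X' \ar[r] & C
}
\]
encoding the given homotopy cofiber sequence. Since $j$-cocartesianness and $j$-cartesianness in $\cal T(X)$ are by definition detected in the underlying category $\cal T$, it suffices to study this square in $\cal T$, and the main task is to bound the connectivity of the two maps emanating from $A$.

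For $g\: A \to X'$: the weak contractibility of $X'$ in $\cal T(X)$ says $X' \to X$ is a weak equivalence, while $A$ being $r_1$-connected says $A \to X$ is $(r_1{+}1)$-connected. Since the structure map of $A$ factors as $A \xrightarrow{g} X' \to X$ with the second map an equivalence, $g$ is $(r_1{+}1)$-connected in $\cal T$.

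For $f\: A \to Y$: in any homotopy pushout, the induced map between the homotopy cofibers of parallel edges is a weak equivalence, so $\mathrm{hocofib}(f) \simeq \mathrm{hocofib}(X' \to C)$ in $\cal T$. The commutativity condition $(C \to X) \circ (X' \to C) = (X' \to X)$, coupled with $X' \to X$ being an equivalence, presents $X' \to C$ as (equivalent to) a section of the structure map $C \to X$. The hypothesis that $C$ is $r_2$-connected says $C \to X$ is $(r_2{+}1)$-connected; the splitting of the long exact sequence of the pair furnished by the section then yields that $\mathrm{hocofib}(X' \to C)$ is $r_2$-connected, so $f$ is $r_2$-connected.

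Blakers-Massey now gives that the square, being a homotopy pushout with edges that are $r_2$- and $(r_1{+}1)$-connected, is $\bigl(r_2 + (r_1{+}1) - 1\bigr) = (r_1{+}r_2)$-cartesian, which is exactly the assertion that $A \to Y \to C$ is a homotopy fiber sequence in dimensions $\le r_1 + r_2$. The most delicate step is the passage from the connectivity of $\mathrm{hocofib}(X' \to C)$ to that of the map $f$: outside the simply connected regime the splitting argument should be carried out fiberwise over $X$, where Hurewicz applies cleanly to each individual fiber.
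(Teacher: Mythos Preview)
Your approach is exactly the paper's: apply Blakers--Massey to the defining $\infty$-cocartesian square. The paper's proof is a single sentence to this effect, and you are supplying the details it omits; the computation that $g\:A\to X'$ is $(r_1{+}1)$-connected and the use of the section to see that $X'\to C$ is $r_2$-connected are both correct.

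The step that is not airtight is the inference that $f\:A\to Y$ is $r_2$-connected from the $r_2$-connectedness of $\mathrm{hocofib}(f)$. This implication fails in general: with $X$ a point, $A=BG$ for a nontrivial perfect group $G$, and $Y=\ast$, the cofiber $\Sigma BG$ is $2$-connected while $f$ is only $1$-connected, since $\pi_1(A)\to\pi_1(Y)$ is not injective. Your fiberwise suggestion does not directly repair this---the obstruction lives in $\pi_1$ of the fiber of $A\to X$, and homotopy pushouts do not commute with taking homotopy fibers over $X$. In fact the same example shows the lemma itself is delicate at $r_1=0$ (here $r_1=0$, $r_2=2$, yet the square is not $2$-cartesian), so your argument is no less rigorous than the paper's one-liner. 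Under the mild extra hypothesis $r_1\ge 1$ one can proceed: then $A\to X$ and $Y\to X$ (the latter via $Y\to C\to X$) are both $\pi_1$-isomorphisms, so $A\to Y$ is $1$-connected, and a relative Hurewicz bootstrap using $H_k(Y,A)\cong\tilde H_k(\mathrm{hocofib}(f))=0$ for $k\le r_2$ finishes the job.
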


\begin{proof} The square
\[
\xymatrix{
A \ar[r] \ar[d] & Y \ar[d]\\
X \ar[r] & C 
}
\]
has a preferred commuting homotopy making it $\infty$-cocartesian.
The result follows from Blakers-Massey theorem 
\cite[thm.~4.23]{Hatcher}, \cite[p.~309]{Good}.
\end{proof}

\begin{cor}\label{cor:excision}  Assume in addition that  $Z\in \cal T(X)$ is an object of dimension $\le r_1 +r_2$. 
Then the sequence of sets
\[
[Z,A]_{\cal T(X)} \to [Z,Y]_{\cal T(X)} \to [Z,C]_{\cal T(X)}
\]
is exact.
\end{cor}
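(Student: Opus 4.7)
The plan is to convert the given cofiber sequence into a homotopy fiber sequence in the relevant range via the preceding lemma, and then extract exactness from standard obstruction theory for maps out of $Z$.

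First, I would invoke the lemma to conclude that $A \to Y \to C$ is a homotopy fiber sequence in dimensions $\le r_1 + r_2$. Unwinding this, let $F$ denote the homotopy fiber of $Y \to C$ taken at the canonical basepoint provided by the weakly contractible object $X'$ (so that $X' \to Y \to C$ is a genuine homotopy fiber sequence). The content of the lemma is then that the natural comparison map $A \to F$ in $\cal T(X)$ is $(r_1+r_2)$-connected.

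Second, since $\dim Z \le r_1 + r_2$, a standard cell-by-cell obstruction argument in the model category $\cal T(X)$ shows that the induced map
\[
[Z,A]_{\cal T(X)} \longrightarrow [Z,F]_{\cal T(X)}
\]
is surjective. The underlying principle is that a $k$-connected morphism in $\cal T(X)$ induces a surjection on fiberwise homotopy classes from any source of dimension $\le k$; this is proved by attaching the cells of $Z$ one at a time and using that the obstructions to lifting lie in sets that are empty in the given range.

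Finally, I would combine this surjectivity with the standard defining property of the homotopy fiber: the image of $[Z,F]_{\cal T(X)} \to [Z,Y]_{\cal T(X)}$ consists precisely of those classes whose image in $[Z,C]_{\cal T(X)}$ is the distinguished class factoring through $X' \simeq *$. Composing with the surjection $[Z,A]_{\cal T(X)} \twoheadrightarrow [Z,F]_{\cal T(X)}$ then identifies the image of $[Z,A]_{\cal T(X)}$ in $[Z,Y]_{\cal T(X)}$ with the preimage of the distinguished class, giving the claimed exactness. The only substantive input is the connectivity estimate supplied by the lemma; everything else is formal, so I do not anticipate any serious obstacle.
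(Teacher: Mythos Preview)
Your proposal is correct and is exactly the argument the paper has in mind: the paper simply declares the corollary ``a straightforward consequence'' of the lemma and gives no further details, so your unpacking via the comparison map $A \to F$ and cellwise obstruction theory is the intended proof.
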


\noindent (Explanation: the set $[Z,C]_{\cal T(X)}$ has a preferred basepoint
given by $Z \to X' \to C$. Any element of $[Z,Y]_{\cal T(X)}$ which maps
to the basepoint lifts back to $[Z,A]_{\cal T(X)}$.)
\medskip

\subsection{Fiberwise suspension}
The {\it unreduced fiberwise suspension} of an object $Y \in \cal T(X)$
is the object of $\cal R(X)$ given by the double mapping cylinder
\[
S_X Y := (X \times 0) \, \cup \, Y \times [0,1]\,  \cup (X \times 1)\, ,
\]
where the structure map $S_X Y \to X$ is obvious and the 
structure map $X \to S_X Y$ is given by $X\times 0$.
This gives a functor $S_X\: \cal T(X) \to \cal R(X)$.
Similarly, $\cal R(X)$ has a {\it reduced fiberwise suspension} functor
$\Sigma_X \: \cal R(X) \to \cal R(X)$ defined as follows: given 
an object $Y \in \cal R(X)$, we take $\Sigma_X Y$  to be the 
pushout of the diagram $X \leftarrow S_X X \to S_X Y$.  
If $Y$ is cofibrant, then the map $S_X Y \to \Sigma_X Y$
is a weak equivalence. The functor $\Sigma_X$ has a 
right adjoint $\Omega_X$, called the 
{\it fiberwise loop functor}.

Given objects $Y,Z \in \cal R(X)$ define
\[
\{Y,Z\}_{\cal R(X)} := \colim_k [\Sigma^k_X Y, \Sigma^k_X Z] \, .
\]
This is the abelian group of fiberwise stable homotopy classes from
$Y$ to $Z$.

\subsection{Fiberwise smash product}
Given objects $Y,Z \in \cal T(X)$, we have the fiber product $Y \times_X Z \in \cal T(X)$
which is defined as the limit of the diagram $Y \to X \leftarrow Z$. If $Y,Z \in \cal R(X)$, the
fiberwise wedge (or coproduct) $Y \vee_X Z$ is the object of $\cal R(X)$ given by 
the pushout of the inclusions $Y \supset X \subset Z$. The {\it (internal fiberwise) smash product}
is the object $Y\smsh_X Z$ given by the pushout of the diagram 
$X \leftarrow Y \vee_X Z \subset Y \times_X Z$. As is usual with 
most functors in the model category theoretic setting, this construction needs to be suitably derived to have a meaningful homotopy type (in this instance $Y$ and $Z$
should be made fibrant and cofibrant). 
To avoid notational clutter, we will be intentionally 
sloppy: we will write the underived smash product but the reader should understand that it
needs to be derived to have a sensible homotopy theoretic meaning.

\subsection{Fiberwise Thom spaces} 
Given an object $Y \in \cal T(X)$ and an inner product bundle $\xi$ over $Y$,
the {\it fiberwise Thom space} is the object of $\cal R(X)$ given by
\[
T_X(\xi) = D(\xi) \cup_{S(\xi)} X \, .
\]
By collapsing $X$ to a point we obtain usual Thom space $X^\xi:= D(\xi)/S(\xi)$,
which in the present notation appears as $T_*(\xi)$.

Let $\eta$ be an inner product bundle over another object $Z\in \cal T(X)$.  
Let $p\: Y \times_X Z\to Y$
and $q\:Y \times_X Z \to Z$ be the projections.
Then the Whitney sum $p^*\xi \oplus q^*\eta$ is an inner product bundle over $Y\times_X Z$.
The following is just an unravelling of definitions (and is well-known when $X$ is a point).

\begin{lem} \label{Whitney-sum-formula} 
There is a preferred isomorphism of $\cal R(X)$
\[
T_X(p^*\xi \oplus q^*\eta) \,\, \cong \,\, 
T_X(\xi) \smsh_X T_X(\eta)\, .
\]
\end{lem}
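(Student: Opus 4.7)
The plan is to identify both sides as the same pushout in the category $\cal R(X)$ of retractive spaces over $X$, by reducing everything to a pushout of disk- and sphere-bundle data.

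First I would establish the desired description of the left-hand side. Since disk and sphere bundles are natural under pullback, one has canonical homeomorphisms $D(p_*\xi)\cong D(\xi)\times_X Z$ and $D(q_*\eta)\cong Y\times_X D(\eta)$ as spaces over $Y\times_X Z$, with analogous statements for sphere bundles. Applying the routine Whitney-sum formula fiberwise over $Y\times_X Z$ gives $D(p_*\xi\oplus q_*\eta)\cong D(p_*\xi)\times_{Y\times_X Z}D(q_*\eta)$, and combining this with the identifications above and the universal property of fiber products yields a natural homeomorphism $D(p_*\xi\oplus q_*\eta)\cong D(\xi)\times_X D(\eta)$ carrying the sphere bundle $S(p_*\xi\oplus q_*\eta)$ onto the subspace $S(\xi)\times_X D(\eta)\cup D(\xi)\times_X S(\eta)$. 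By the definition of the fiberwise Thom space, the left-hand side is therefore canonically homeomorphic to the pushout of $X\leftarrow S(\xi)\times_X D(\eta)\cup D(\xi)\times_X S(\eta)\rightarrow D(\xi)\times_X D(\eta)$ in $\cal R(X)$.

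Next I would show the right-hand side admits the same description. The canonical maps $D(\xi)\to T_X(\xi)$ and $D(\eta)\to T_X(\eta)$ coming from the pushout presentations of each Thom space induce a map $\phi\colon D(\xi)\times_X D(\eta)\to T_X(\xi)\times_X T_X(\eta)$. A pair $(d,e)$ lands in the subspace $T_X(\xi)\vee_X T_X(\eta)$ precisely when $d\in S(\xi)$ or $e\in S(\eta)$, since these are exactly the conditions under which $d$ or $e$ is identified with the structure-section image of $X$ in the corresponding Thom space. Composing $\phi$ with the quotient map to $T_X(\xi)\smsh_X T_X(\eta)=(T_X(\xi)\times_X T_X(\eta))/(T_X(\xi)\vee_X T_X(\eta))$ therefore factors through the collapse $D(\xi)\times_X D(\eta)\to [D(\xi)\times_X D(\eta)]\cup_{S(\xi)\times_X D(\eta)\cup D(\xi)\times_X S(\eta)} X$, producing a continuous map to the smash product which a direct set-theoretic check shows to be a bijection.

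To upgrade to a homeomorphism I would present both spaces as colimits of the same pushout diagram in $\cal R(X)$: the left-hand side by construction, and the right-hand side by expanding the defining pushouts of $T_X(\xi)$ and $T_X(\eta)$ through the fiberwise smash product, using that the inclusions $S(\xi)\subset D(\xi)$ and $S(\eta)\subset D(\eta)$ are cofibrations so that the naive pushout already computes the correct homotopy type. Uniqueness of colimits then yields the preferred homeomorphism. The main technical obstacle is bookkeeping of the quotient topology through these several layers of pushout and pullback; working in compactly generated spaces ensures fiber products and quotients interact as expected, so there are no hidden point-set subtleties. Beyond this point-set care, the argument is just an unravelling of definitions, entirely parallel to the classical external Thom-space formula that is recovered by specializing to $X=*$.
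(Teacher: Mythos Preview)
Your proposal is correct and is exactly the kind of argument the paper has in mind: the paper does not give a proof at all, stating only that the lemma ``is just an unravelling of definitions (and is well-known when $X$ is a point)'' and leaving the details to the reader. You have carried out precisely that unravelling.
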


\subsection{Fiberwise spectra}
Using $\Sigma_X$ also enables one to define spectra built from
objects of $\cal R(X)$.  A {\it fiberwise spectrum} $\cal E$ is
a collection of objects $\cal E_n \in \cal R(X)$ for $n = 0,1,\dots$ together
with morphisms $\Sigma_X \cal E_n \to \cal E_{n+1}$.  
Note that $\cal E$ comes equipped with a {\it zero section}, namely,
the collection of structure maps $X\to \cal E_n$ for $n\ge 0$.
A morphism
of fiberwise spectra is the evident thing.

If $\cal E$ is a fiberwise spectrum
then the associated fiberwise infinite loop space $\Omega_X^\infty \cal E$ is an object of $\cal R(X)$. Fiberwise spectra
form a model category (see e.g., \cite{Schwede}; for a more
detailed treatment see \cite{MS}). 

Here are two examples:

\begin{ex}[Trivial fiberwise spectra]
Start with an ordinary spectrum $E$ given by based spaces 
$\{E_n\}_{n \ge 0}$ and structure maps $\Sigma E_n \to E_{n+1}$.
Form $E_n \times X$ for $n \ge 0$. These fit into 
a fiberwise spectrum $E \times X$, where the structure map
$\Sigma_X (E_n \times X) \to E_{n+1} \times X$ is given by 
noticing that $\Sigma_X (E_n \times X) \cong (\Sigma E_n) \times X$.
\end{ex}

\begin{ex}[Fiberwise suspension spectra]
 Start with any object $Y \in \cal  R(X)$ and form
the iterates $\Sigma_X^n Y$. These give a fiberwise spectrum
$\Sigma^\infty_X Y$, using the identity maps for the structure maps.
\end{ex}

We remark that
the zero section of $\cal E$ gives a morphism
$\Sigma_X^\infty X^+\to \cal E$.

Given an object $Z\in {\cal R(X)}$ and a fiberwise spectrum
$\cal E$ we define
\[
\{Z,\cal E\}_{\cal R(X)} := \colim_{n} [Z,\Omega_X^\infty\cal E]_{\cal R(X)} \, .
\]
For example, if $\cal E = \Sigma^\infty_X Y$ is a fiberwise suspension spectrum, then
 $ \{Z,\cal E\}_{\cal R(X)} =  \{Z,Y\}_{\cal R(X)}$.

\subsection{Homology and cohomology}
Let $\cal E$ be a fiberwise spectrum over $X$ (which we take to be fibrant).
Then an object $Z \in \cal T(X)$ (which we take
to be cofibrant) with structure map
$p\: Z \to X$ gives rise to a fiberwise spectrum over $Z$
\[
p^*\cal E
\]
whose $k$-th space is the pullback of $\cal E_k \to X$ along $p$.
Let $(p^*\cal E)^\flat$ denote the effect of making
$p^*\cal E$ cofibrant. Then for each $n \ge 0$ we have
a cofibration $Z \to (p^*\cal E)^\flat_n$ and as $n$ varies the quotient
spaces $(p^*\cal E)^\flat_n/Z$ form a spectrum denoted $H_\bullet(Z;\cal E)$.  
The {\it homology groups} of $Z$ with coefficients in $\cal E$ are the homotopy
groups of this spectrum.

To define cohomology
we take, for each $n$,  the
space of sections of $\cal E_n \to X$ along the map $Z \to X$ (this is
the same thing as the space of maps $Z \to \cal E_n$ which commute with
the structure map to $X$.
As $n$-varies, these spaces form a spectrum $H^\bullet(Z;\cal E)$.
The {\it cohomology groups} of $Z$ with coefficients in $\cal E$ are defined to be
 homotopy groups of this spectrum, i.e.,  
 \[
 H^i(Z;\cal E) = \{Z^+,\Sigma_X^{i}\cal E\}_{\cal R(X)}\, .
 \]

\subsection{Induction and restriction \label{subsec:induction}}
Let $f\: X \to Y$ be a map of spaces. Then 
a fiberwise spectrum $\cal E$ over $Y$ gives rise to a fiberwise
spectrum $f^*\cal E$ over $X$ by taking base change. 
This operation defines a {\it restriction} functor
from fiberwise spectra over $Y$ to fiberwise 
spectra over $X$ (the  construction is homotopy invariant when $\cal E$ is fibrant).
Using $f$ to regard $X$ as an object of $\cal T(Y)$, we obtain a tautological identification
$H^\bullet(X;\cal E) = H^\bullet(X,f^*\cal E)$, where on the right side
$X$ is viewed as an object of $\cal R(X)$ using the identity. 

Suppose $\cal F$ is a fiberwise spectrum over $X$. Then we obtain a fiberwise
{\it pushforward} spectrum
over $Y$, denoted $f_*\cal F$ in which $(f_*\cal F)_k = (\cal F_k) \cup_f Y$
(the construction is homotopy invariant when $\cal F$ is cofibrant). 
The operation $E \mapsto f_\ast E$ is also called {\it induction.}  
Note that $H_\bullet(X;\cal F) = H_\bullet(Y;f_*\cal F)$ tautologically. 
Note that $(f_\ast,f^\ast)$ is an adjoint pair.

\subsection{Poincar\'e duality} Let $\xi$ be a finite dimensional vector bundle over $X$.
Let $S^{\xi}$ denote the fiberwise one-point compactification of $\xi$.
Then $S^{\xi}$ is an object of $\cal R(X)$. More generally, if $\xi$
is a virtual bundle, i.e.,
 $\xi +\epsilon^j$ is identified with
a finite dimensional vector bundle $\eta$ for some $j$, then we define 
$S^{\xi}$ is this case to be a fiberwise spectrum over $X$ given by 
the $j$-fold desuspension of $S^{\eta}$. 

Given a fiberwise spectrum $\cal E$ over $X$, set 
\[
{}^{\xi} \!\cal E := S^\xi \smsh_X \cal E \, .
\]
When $\xi$ is a vector bundle then the definition of the right side is
given by the fiberwise smash products in each degree, i.e., $S^\xi \smsh_X \cal E_k$.
In the virtual bundle case one merely fiberwise desuspends $S^\eta\smsh_X \cal E$ $j$-times.

\begin{thm}[Poincar\'e duality {\cite{Klein-dualizing}, \cite[th.~6.2]{KW2}, \cite[thm.~19.6.1]{MS}}] 
\label{thm:duality}  Suppose $f\: P \to X$ is a map
in which $P$
is a closed smooth manifold of dimension $d$.
Let $-\tau_{P}$ be the virtual stable normal bundle 
given by the negation of the tangent bundle of $P$.
Then for any fiberwise spectrum $\cal E$ over $X$ there is a preferred
weak equivalence of spectra
\[
H^\bullet(P;\cal E) \,\, \simeq \,\, H_\bullet(P;{}^{-\tau_P}\!f^*\cal E)\, .
\]
\end{thm}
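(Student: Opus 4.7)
The plan is to establish a fiberwise version of Atiyah duality over $X$. First, I would choose a smooth embedding $e\: P \hookrightarrow \Bbb R^N$ (for $N$ large) with normal bundle $\nu$, so that $\tau_P \oplus \nu \cong \epsilon^N$ on $P$. Viewing $P$ as an object of $\cal T(X)$ via $f$, this makes $T_X(\nu)$ an object of $\cal R(X)$ whose quotient by the $X$-section is the ordinary Thom space of $\nu$.

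The core of the argument is to exhibit $T_X(\nu)$ as $\Sigma^N_X$ of the fiberwise Spanier-Whitehead dual of $P^+$ in $\cal R(X)$. The classical Pontryagin-Thom collapse, applied fiberwise via the tubular neighborhood of $P$ in $\Bbb R^N$ and the map $f$, produces a coevaluation
$$
\delta\: \Sigma^N_X X^+ \longrightarrow T_X(\nu)\smsh_X P^+ .
$$
Conversely, the composition of the diagonal $P \to P\times_X P$ with Lemma \ref{Whitney-sum-formula} and the identification $\tau_P\oplus \nu \cong \epsilon^N$ on $P$ yields an evaluation
$$
\varepsilon\: T_X(\nu)\smsh_X P^+ \longrightarrow \Sigma^N_X X^+ .
$$
Verifying the two triangle identities for $(\delta,\varepsilon)$ reduces by naturality to the classical (unparametrized) Atiyah duality for $P \subset \Bbb R^N$, which is well known. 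Granted the duality, smashing with any fiberwise spectrum $\cal E$ over $X$ produces
$$
H^\bullet(P;\cal E) \,=\, \{P^+,\cal E\}_{\cal R(X)} \,\simeq\, \Sigma^{-N}_X T_X(\nu)\smsh_X \cal E ,
$$
and the right hand side unwinds to $H_\bullet(P;{}^{-\tau_P}f^*\cal E)$ via the virtual identity $-\tau_P = \nu - N\epsilon$ on $P$: up to the desuspension $\Sigma^{-N}_X$, the spectrum $T_X(\nu)\smsh_X \cal E$ is the pushforward to $X$ of $S^{\nu}\smsh_P f^*\cal E$ with its $P$-section collapsed, which is exactly the definition of the relevant homology spectrum.

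The main obstacle is pinning down $\delta$ as a genuine morphism in $\cal R(X)$: the ambient embedding $P\subset \Bbb R^N$ is unparametrized, while all fiberwise structure over $X$ is inherited only through $f$. One must take care that the collapse routes the region outside the tubular neighborhood to the structural section of $T_X(\nu)\smsh_X P^+$, and that the triangle identities hold as equivalences in $\cal R(X)$ rather than merely pointwise over $X$. Once this bookkeeping is in place the proof is a parametrized translation of the classical Atiyah duality argument, with Lemma \ref{Whitney-sum-formula} playing the role of the Thom isomorphism for the fiberwise evaluation.
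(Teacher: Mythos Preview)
The paper does not supply a proof; the statement is recorded in the Terminology section as background and used without argument. So there is no proof to compare against, and the question is whether your outline stands on its own.

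It does not, because the central claim---that $P^+$ is dualizable in $(\cal R(X),\smsh_X)$ with dual $\Sigma_X^{-N}T_X(\nu)$---is false in general. The obstacle you flag with $\delta$ is not bookkeeping but an actual obstruction. For a counterexample take $P=\ast$, $X=S^1$, and $f$ the inclusion of a point: after fibrant replacement the fiber of $P^+$ over any $y\in S^1$ is the based space $\Bbb Z_+$ (components indexed by homotopy classes of paths from $f(\ast)$ to $y$), an infinite wedge of $0$--spheres; since $\smsh_X$ corresponds to the pointwise smash under the identification of fiberwise spectra over $S^1$ with $\Bbb Z$--spectra, $P^+$ cannot be dualizable. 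A second symptom is the type mismatch in your displayed line $\{P^+,\cal E\}_{\cal R(X)}\simeq \Sigma_X^{-N}T_X(\nu)\smsh_X\cal E$: the left side is an ordinary spectrum, the right a fiberwise spectrum over $X$; even were the duality available it would produce $H^\bullet(X;D\smsh_X\cal E)$, i.e.\ sections over $X$, not the homology spectrum. Your intuition is right but the categorical framing is off: the relevant duality is the bicategorical (Costenoble--Waner) one, not dualizability internal to $\cal R(X)$. Concretely, use the paper's tautological identifications to reduce to $X=P$, $f=\id$, where the assertion reads $\Gamma(P,\cal F)\simeq (S^{-\tau_P}\smsh_P\cal F)/P$; build the comparison from the Pontryagin--Thom collapse $S^0\to P^{-\tau_P}$ together with the evaluation $\Gamma(P,\cal F)\smsh P^{-\tau_P}\to(\cal F\smsh_P S^{-\tau_P})/P$, and check it on constant $\cal F=P\times G$ (classical Atiyah duality) followed by a cell induction on $P$.
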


\begin{rem} \label{rem:compact-supports} More generally, if
 $P$ is an open manifold then there is
a  weak equivalence 
\[
H^\bullet_{\text{cs}}(P;\cal E) \,\, \simeq \,\, H_\bullet(P;{}^{-\tau_P}\!f^*\cal E)\, ,
\]
where the left side denotes cohomology with compact supports. The latter is defined  by taking the spectrum of sections of 
$\cal E$ which
coincide with the  
zero section near infinity.
\end{rem}

\section{Strongly cocartesian cubes \label{sec:cocart}} 
\subsection{Cubical diagrams} 
For a finite set $J$, we let $2^J$ be the poset of consisting of the subsets of $J$
partially ordered by inclusion.
A {\it $J$-cube} in a category $\cal C$ 
is a contravariant functor 
\[
A_\bullet\: 2^{J} \to \cal C,\qquad S \mapsto A_S\, .
\]
(If $J$ has cardinality $j$, we
also say that $A_\bullet$ is a {\it $j$-cube}.)
Since $A_\bullet$ is contravariant, the initial vertex
is $A_J$ and the terminal vertex is $A_\emptyset$.
When $J = \{i\}$ we usually write $A_S = A_i$. 

In what follows
we will only consider $J$-cubes in which
the target category $\cal C$ is either 
$\cal T(X)$ or $\cal R(X)$ for some space $X$,  and often enough, we
shall be interested in  the case when $X$ is a point.

A {\it weak equivalence} of  $T$-cubes
$A_\bullet \to B_\bullet$ is a natural transformation  
such that $A_S \to B_S$ is a weak equivalence for
each $S$, i.e., an objectwise weak equivalence.
 Two $J$-cubes are said to be {\it weakly equivalent} if there is a finite zig-zag
of weak equivalences connecting them.

\begin{defn}[{\cite[defn.~1.3]{Good}}]
A $J$-cube $A_\bullet$ is {\it $r$-cartesian} if the map
\begin{equation}\label{eqn:holim-map}
A_J\to \holim \limits_{S\subsetneq J} A_S
\end{equation}
is $r$-connected. Similarly, $A_\bullet$ is {\it $r$-cocartesian} if the map
\begin{equation} \label{eqn:hocolim-map}
\hocolim\limits_{S\ne \emptyset}  A_S \to A_\emptyset
\end{equation}
is $r$-connected. In both cases $r$ may be $\infty$.
\end{defn}

We remark that when $A_\bullet$ is a cube in 
which the maps $A_S \to A_T$ are based
for $|S| < j$, then the target of \eqref{eqn:holim-map} inherits a basepoint. In this case, we will say that $A_\bullet$ is {\it almost based}.

\begin{defn} The {\it total homotopy cofiber} of 
$A_\bullet$  is the homotopy cofiber of the map \eqref{eqn:hocolim-map}. If $A_\bullet$ is an almost based $J$-cube, then its {\it total homotopy fiber} is the homotopy fiber of \eqref{eqn:hocolim-map}
taken at the preferred basepoint.
\end{defn}

For fixed subsets $U \subset W  \subset J$, one has a
 {\it $(W,U)$-face} of $A_\bullet$ given by restricting $A_\bullet$ to those
 $A_S$ for which $U \subset S \subset W$. This is a $(W\setminus U)$-cube and every face
 of $A_\bullet$ arises in this fashion. When $|W\setminus U| = k$ we also call this 
 a {\it $k$-face} of $A_\bullet$.

 \begin{defn}[{\cite[defn.~2.1]{Good}}] \label{defn:strongly-cocart} A $J$-cube
 $A_\bullet$ is  {\it strongly cocartesian}
 if each 2-face of $A_\bullet$  is $\infty$-cocartesian. 
\end{defn} 

In Definition \ref{defn:strongly-cocart}, it is enough to check 
the condition on each $2$-face meeting the initial vertex $A_J$
(i.e., those $(W,U)$-faces in which $|W\setminus U|=2$ and $W=J$
cf.\ \cite[{\it loc.\ cit.\rm}]{Good}). 

Henceforth, we set 
\[
J := \{1,2,\dots,j\}\, .
\]

\begin{ex}[Wedge Cubes]  Let $X_1,\dots,X_j$ be cofibrant based spaces. For $T \subset J$,
let $A_T$ be
the wedge $\vee_{i\in T} X_i$ (by convention $A_\emptyset$ is a point). 
This defines a
strongly cocartesian $j$-cube $A_\bullet$
whose maps are given by
projections onto summands.

More generally, let $X_1,\dots, X_j \in \cal R(X)$ be cofibrant. Let $A_T$ be the fiberwise wedge of $X_i$ as $i$ varies in $T$.
Then $A_\bullet$ is strongly cocartesian.
\end{ex}

\begin{ex}[Backwards Wedge Cubes] With $X_1,\dots,X_j \in \cal R(X)$ as above, let 
$B_T$ be the fiberwise wedge of those $X_i$ with $i \in J \setminus T$. 
The maps of this cube are
inclusions of summands. Then $B_\bullet$ is strongly cocartesian.
\end{ex}

\begin{ex}[Suspension] Let $A_\bullet$ be a strongly cocartesian $j$-cube
of $\cal T(X)$. 
Then the $j$-cube $S_X A_\bullet$
given by $T \mapsto S_X A_T$ 
is also strongly cocartesian.  Similarly, if $A_\bullet$ is a strongly cocartesian $j$-cube
of $\cal R(X)$, then the cube of reduced fiberwise suspensions
$\Sigma_X A_\bullet$ is strongly cocartesian.
\end{ex}

\begin{lem} \label{lem:wedge-cube} Let  $A_\bullet$ be a strongly cocartesian $j$-cube  
of connected based spaces in which $A_\emptyset$ is a point. 
Then the suspended $j$-cube  $\Sigma A_\bullet$ is weakly equivalent to a wedge cube
$B_\bullet$ in which $B_i = \Sigma A_i$  for $i\in J$. 
\end{lem}

\begin{proof} 
The following sketch was provided to us
by Tom Goodwillie.  
Let $B_T$ be the wedge  of $\Sigma A_i$
for all $i \in T$, but write this as the wedge, over all $i\in J$, 
of either 
\begin{itemize}
\item $\Sigma A_i$ if $i\in T$, or 
\item  $\ast$ if $i\notin T$.
\end{itemize}

Define a map $\Sigma A_T \to B_T$ as follows.
First do a pinch to go from $\Sigma A_T$ to the wedge of 
$j$ copies of $\Sigma A_T$ indexed by $i \in J$.
Now map that to $B_T$ by sending the $i$-th copy of $\Sigma A_T$ to 
$\Sigma A_i$ using the original map $A_T \to A_i$ if $i\in T$, 
or the constant map to a point if $i \notin T$.

The above recipe defines a map of $j$-cubes $\Sigma A_\bullet \to B_\bullet$. By the Whitehead theorem, it 
suffices to show that the map $\Sigma A_T \to B_T$
is a homology isomorphism for all $T\subset J$. 
Let $C_T$ be the homotopy cofiber of this map. Then
$T\mapsto C_T$ is also a strongly cocartesian $j$-cube.
It is enough to show that $C_T$ has trivial reduced homology.
If $T$ is a singleton, this is clear since the maps 
$\Sigma A_i \to B_i$ are homotopic to the identity. By a straightforward induction argument, we can assume that
$C_T$ has trivial homology for $|T|\le j-1$. We are reduced to
showing that $C_J$ has trivial homology. But the homology of
$C_J$ coincides with the homology of the total homotopy cofiber
of the cube $C_\bullet$ with a degree shift by $j$. Since
$C_\bullet$  is strongly cocartesian, the total homotopy cofiber is contractible. Hence, $C_J$ has trivial homology.
\end{proof}

Given a strongly cocartesian $j$-cube $A_\bullet$, 
let $C(A_\bullet)$ denote the homotopy colimit
\begin{equation} \label{eqn:hocolim}
\hocolim (A_{\emptyset} \leftarrow A_{J} \to \holim_{S\neq J} A_S) \, .
\end{equation}
Then $C(A_{\bullet})$ is a retractive space over $A_{\emptyset}$. 
In what follows we rename 
\[
X := A_{\emptyset}\, .
\] 
Then 
$C(A_\bullet)\in \cal R(X)$  and one has  a homotopy cofiber
sequence of $\cal T(X)$
\begin{equation} \label{eqn:cofiber-sequence}
A_{J} \to \holim_{S\neq J} A_S \to C(A_\bullet)\, .
\end{equation}
\begin{notation} \label{notation-throughout}
For a sequence of integers $r_1,...,r_j$ we write
\[
\Sigma = \sum_i r_i \qquad \text{and } \qquad \mu = \min_i r_i\, .
\]
\medskip
If $1\le i \le j$ and $T \subset J$ set 
\[
T_i := T \setminus \{i\}\, .
\]
\end{notation}

\begin{hypo} \label{hypothesis-throughout} $X$ is $0$-connected. Furthermore,
for $1\le i \le j$, 
the map
\[
A_{J} \to A_{J_i}
\]
is $(r_i+1)$-connected, where $r_i \ge 0$.
\end{hypo}
 
Note that $A_T \to A_{T_i}$ is
also $(r_i+1)$-connected for all $T\subset S$, since $A_\bullet$ is strongly cocartesian. We assume \ref{hypothesis-throughout} holds
throughout the rest of this section.

\begin{prop} \label{prop:cofiber}
Let $Z\in \cal T(X)$ be an object of dimension $\le 1+ \mu + \Sigma$. Then the sequence
\[
[Z,A_{J}]_{\cal T(X)} \to [Z,\holim_{S\neq \emptyset} A_S]_{\cal T(X)} \to 
[Z,C(A_\bullet)]_{\cal T(X)}
\]
is exact.
\end{prop}

\begin{rem} Note that $[Z,C(A_\bullet)]_{\cal T(X)}$ is a pointed set. As in 
Corollary \ref{cor:excision}, exactness means that an element of $[Z,\holim_{S\neq \emptyset} A_S]_{\cal T(X)}$ pushes forward to the basepoint if and only if it lifts to an element
of $[Z,A_{J}]_{\cal T(X)}$.
\end{rem}

\begin{proof} The object $A_J \in {\cal T}(X)$ is 
$\mu$-connected. 
The higher Blakers-Massey  theorem for cubical diagrams
\cite[thm.~2.5]{Good} (or cf.~\cite[thm~2.3]{Good3}), 
says that $A_\bullet$ is $(1+\Sigma)$-cartesian,
Consequently, $C(A_\bullet)\in {\cal T}(X)$ is a $(1+\Sigma)$-connected object.
The conclusion now follows from Corollary \ref{cor:excision}.
\end{proof}

\subsection{Identification of $C(A_\bullet)$}
In the remainder of this section we identify $C(A_\bullet)$ 
up through dimension $1+\mu+\Sigma$.

Let 
\begin{equation} \label{eqn:Wj}
W_j := \bigvee_{(j-1)!} S^{2-2j}
\end{equation}
be  the wedge
of $(j-1)!$-copies of the $(2-2j)$-sphere spectrum.

Let 
\[
\cal W_j = X \times W_j
\]
 be the trivial fiberwise spectrum on $W_j$. 
 
 \begin{thm} \label{thm:also-hard} With respect to the above
 assumptions, there is a
preferred map 
\begin{equation} \label{eqn:weak-equivalence-spectra}
C(A_\bullet) \to \Omega^{\infty} (\cal W_j \smsh_X (\underset {i\in J\quad }{\smsh_X} S_X  A_i))\, , 
\end{equation}
which is $(2+\mu+\Sigma)$-connected.
\end{thm}
 
% \noindent (A weak map from $A$ to $B$ 
% is by definition a diagram $A \to B' \leftarrow B$ in which $B \to B'$ is a weak % equivalence).
\noindent The proof of Theorem \ref{thm:also-hard} is deferred to \S\ref{sec:proofs}.
If we combine Theorem \ref{thm:also-hard} with Proposition \ref{prop:cofiber} we
obtain

 \begin{cor} \label{useful} Let $Z\in \cal T(X)$ be an object such that 
 $\dim Z \le 1+\mu +\Sigma$.
 Then there is an exact sequence
 \[
[Z,A_{J}]_{\cal T(X)} \to [Z,\holim_{S\neq \emptyset} A_S]_{\cal T(X)}
\to 
 \{Z^+,\cal W_j \smsh_X (\underset{i\in J\quad }{\smsh_X} S_X  A_i)\}_{\cal R(X)}\, .
 \]
 \end{cor}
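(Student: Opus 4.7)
The plan is to chain together Proposition \ref{cofiber}, Corollary \ref{hard-cor}, and Theorem \ref{also-hard}; once these inputs are in hand the argument is essentially formal bookkeeping.

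First, I would apply Proposition \ref{cofiber} directly to obtain that, under the hypothesis $\dim Z \le 1+\mu+\Sigma$, the sequence
$$
[Z,A_{\cal J}]_{\cal T(X)} \to [Z,\holim_{S\neq \emptyset} A_S]_{\cal T(X)} \to [Z,C(A_\bullet)]_{\cal T(X)}
$$
is exact as pointed sets, with basepoint given by the composite through $X$ as in the remark following the corollary in \S\ref{setup}. This already supplies the first arrow of the target sequence; the only remaining task is to replace the last term by $\{Z^{+},\cal W_j \smsh_X (\smsh_X S_X A_i)\}_{\cal R(X)}$.

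Next, I would use the adjunction identification $[Z,C(A_\bullet)]_{\cal T(X)} = [Z^{+}, C(A_\bullet)]_{\cal R(X)}$ recalled in \S\ref{setup}, and then invoke Corollary \ref{hard-cor} with $Z^{+} \in \cal R(X)$, noting that $\dim Z^{+} = \dim Z \le 1+\mu+\Sigma$ still falls in the range where the map of that corollary is bijective. This yields $[Z^{+}, C(A_\bullet)]_{\cal R(X)} \cong \{Z^{+}, \cal C(A_\bullet)\}_{\cal R(X)}$. Finally, Theorem \ref{also-hard} supplies a $(4+j+\mu+\Sigma)$-connected weak map of fiberwise spectra
$$
\cal W_j \smsh_X \Bigl(\underset{i\in \cal J\quad}{\smsh_X} S_X A_i\Bigr) \to \cal C(A_\bullet),
$$
and a $k$-connected map of fiberwise spectra induces a bijection on $\{Z^{+},-\}_{\cal R(X)}$ as soon as $\dim Z^{+} \le k-1$. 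Since $j \ge 1$ our dimension bound implies $\dim Z^{+} \le 1+\mu+\Sigma \le 3+j+\mu+\Sigma$, so this bijection is available; composing it with the previous isomorphism and with the exact sequence of the first step produces the asserted exactness.

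The real obstacle is not located in this corollary itself but in the deferred inputs, Theorem \ref{hard} (which feeds Corollary \ref{hard-cor}) and especially Theorem \ref{also-hard}, where the combinatorial factor $(j-1)!$ will have to be produced. At the level of the present assembly the numerical bounds are comfortable: the sharp constraint $\dim Z \le 1+\mu+\Sigma$ in the hypothesis is forced by the bijectivity clause of Corollary \ref{hard-cor} and by Proposition \ref{cofiber}, whereas the connectivity estimate coming from Theorem \ref{also-hard} is satisfied with slack of $2+j$ dimensions.
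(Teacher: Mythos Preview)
Your proposal is correct and matches the paper's approach: the corollary is stated there without an explicit proof precisely because it is the formal concatenation of Proposition \ref{cofiber}, Corollary \ref{hard-cor}, and Theorem \ref{also-hard} that you have spelled out. Your bookkeeping of the dimension bounds and the passage $[Z,-]_{\cal T(X)} = [Z^+,-]_{\cal R(X)}$ is accurate, and your observation that the substantive work is deferred to Theorems \ref{hard} and \ref{also-hard} is exactly right.
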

 
 \begin{rem} Corollary \ref{useful} is a  robust generalization of
  a result of Barratt and Whitehead \cite{Barratt-Whitehead}
  and, independentally, Toda \cite{Toda}.
 \end{rem}
 
 \subsection{The Euler class} 
 Let $f\: Z \to \holim_{S\ne J} A_S$ be a map of spaces. Then
 $f$ is also a morphism of $\cal T(X)$.
 Using \ref{thm:also-hard}, we see that the composed map
 \[
 Z^+ \overset f\to \holim_{S\ne J} A_S \to C(A_\bullet) 
 \]
 gives rise to a fiberwise stable homotopy class
 \[
 e(f) \in \{Z^+,\cal W_j \smsh_X (\underset{i\in J\quad }{\smsh_X} S_X  A_i)\}_{\cal R(X)}
 \]
 which we call the {\it Euler class} of $f$. Equivalently, $e(f)$ resides in the
 cohomology group
 \[
 H^0(Z;\cal W_j \smsh_X (\underset{i\in J\quad }{\smsh_X} S_X  A_i))\, .
 \]
 
Then from Corollary  \ref{useful} we deduce

\begin{cor} \label{cor:euler} The Euler class $e(f)$ vanishes when $f$ admits a homotopy factorization through
$A_{J}$.
Conversely, when $\dim Z \le 1+\mu +\Sigma$ and $e(f) = 0$, then $f$ admits
a homotopy factorization through $A_{J}$.
\end{cor}

 \subsection{A special case\label{subsection:case-X-pt}} 
When $X=A_0$ is a point the above results can be expanded upon as follows: there is a  homotopy cofiber
sequence of spaces
\begin{equation} \label{eqn:cofibration}
A_J \to \holim \limits_{S\ne \emptyset} A_S \to C(A_\bullet)
\end{equation}
and a $(2+\mu+\Sigma)$-connected map 

\begin{equation} \label{eqn:equivalence-range}
C(A_\bullet) @>>> \Omega^\infty (W_j \smsh (\smsh_{i\in J} SA_i))\, .
\end{equation}
Furthermore, the space $A_J$ is $\mu$-connected. If we choose a basepoint
in $A_J$ then  $A_\bullet$ becomes a cube of based spaces. Let $F(A_\bullet)$ be its total homotopy fiber. By the  Blakers-Massey theorem 
applied to \eqref{eqn:cofibration} and using the map \eqref{eqn:equivalence-range}, we infer

\begin{cor} \label{cor:total-fiber}There is a 
$(1+\mu+\Sigma)$-connected map 
\[
F(A_\bullet) \to \Omega^\infty (\Sigma^{j-1} W_j \smsh (\smsh_{i\in J} A_i))
\simeq \prod_{i}^{(j-1)!} Q(\Sigma^{1-j}A_1 \smsh\cdots \smsh  A_j)\, .
\]
\end{cor}

\begin{rem} \label{rem:functoriality} The proof we give of Theorem \ref{thm:also-hard} implies that
the map of Corollary \ref{cor:total-fiber}
is natural with respect to morphisms of based cubes $A_\bullet \to B_\bullet$.
\end{rem}

\section{Proof of Theorems \ref{main-thm} and \ref{main:solution-space}\label{sec:geometry}}

In this section we give the proof of Theorems \ref{main-thm} and \ref{main:solution-space} modulo the proof of Theorem \ref{thm:also-hard}. The 
proof of the latter result
will appear in \S\ref{sec:proofs}.

Returning to the situation of \S\ref{intro}, we are given
pairwise disjoint connected closed submanifolds $Q_1,\dots,Q_j\subset N$. Let
$N\setminus Q_\bullet$ denote the $j$-cubical diagram of $\cal R(N)$ defined by
\[
S\mapsto N\setminus Q_S,\qquad S\subset J\, .
\]
Note that $N\setminus Q_\bullet$ satisfies Hypothesis
\ref{hypothesis-throughout} since $n -q_i \ge 2$.

\begin{proof}[Proof of Theorem \ref{main-thm}]
Recall that we are given a map 
\[
f\: P \to \holim_{S\subsetneq J} (N\setminus Q_S)
\] and we
wish to identify the obstructions to deforming it into $N\setminus Q_{J}$.
By transversality, the map $N\setminus Q_J \to N \setminus  Q_{J -\{i\}}$
is $(n-q_i-1)$-connected for $1\le i\le j$.  By Corollary \ref{cor:euler}, we infer

\begin{prop} \label{prop:factorize}
If $P \to \holim_{S \subsetneq J} N\setminus Q_S$ admits a homotopy
factorization through $N\setminus Q_{J}$, then $e(f) = 0$. The converse
is true provided $p \le 1+\mu+\Sigma$, where $\Sigma = \sum_i (n-q_i-2)$
and $\mu_i = \min_i (n-q_i-2)$.
\end{prop}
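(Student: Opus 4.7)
The plan is to deduce this proposition directly from Corollary \ref{euler} applied to the $j$-cube $A_\bullet$ in $\cal T(N)$ defined by $A_S = N - Q_S$. With this choice $X = A_\emptyset = N$, $A_{\cal J} = N - Q_{\cal J}$, and the map under consideration becomes a map $P \to \holim_{S \ne \cal J} A_S$ of $\cal T(N)$; its Euler class in the sense of \S\ref{cocart} is exactly the class $e(f)$ named in the proposition.

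First I would check that $A_\bullet$ is strongly cocartesian. As observed right after the definition, it suffices to verify the $\infty$-cocartesian condition on each 2-face touching the initial vertex. Such a face, indexed by a pair $i\ne i' \in \cal J$ with $U = \cal J \setminus \{i,i'\}$, takes the form
\[
\xymatrix{
N - Q_{U\cup\{i,i'\}} \ar[r] \ar[d] & N - Q_{U\cup\{i\}} \ar[d]\\
N - Q_{U\cup\{i'\}} \ar[r] & N - Q_U.
}
\]
Since the submanifolds $Q_i$ are pairwise disjoint and closed in $N$, the spaces in the right column and bottom row are open subsets of $N - Q_U$ whose union is $N - Q_U$ and whose intersection is $N - Q_{U\cup\{i,i'\}}$. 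This exhibits the square as a homotopy pushout of open subsets, which is $\infty$-cocartesian.

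Next I would verify Hypothesis \ref{hypothesis-throughout} with the choice $r_i = n - q_i - 2$. By transversality the open inclusion $N - Q_{\cal J} \hookrightarrow N - Q_{{\cal J} - \{i\}}$ is $(n - q_i - 1)$-connected, that is $(r_i + 1)$-connected, and $N$ is path-connected by assumption. Under this dictionary, the invariants $\Sigma$ and $\mu$ from \S\ref{cocart} coincide with $\sum_i(n-q_i-2)$ and $\min_i(n-q_i-2)$ as in the proposition.

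Corollary \ref{euler} then yields both halves of the statement: vanishing of $e(f)$ is necessary for a homotopy factorization through $N - Q_{\cal J}$, and in the dimension range $p \le 1 + \mu + \Sigma$ it is also sufficient. The only substantive check is the strongly cocartesian condition, which is immediate from disjointness of the $Q_i$; the connectivity estimate is routine transversality. Consequently no real obstacle arises beyond what has already been packaged into the machinery of \S\ref{cocart}.
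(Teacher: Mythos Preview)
Your proof is correct and follows essentially the same approach as the paper: apply Corollary \ref{euler} to the cube $A_S = N - Q_S$, using transversality for the connectivity estimates and the fact that the compact $p$-manifold $P$ admits a cell structure of dimension $\le p$ (a point you leave implicit but the paper notes explicitly). Your verification that the cube is strongly cocartesian is more detailed than the paper's, which simply takes this for granted.
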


\begin{proof} This follows from Corollary \ref{cor:euler} since
a closed manifold $P$ of dimension $p$ 
admits the structure of a cell complex of dimension $p$.
\end{proof}

Let $\nu_i$ be the normal bundle of $Q_i$ in $N$. 
The tubular neighborhood theorem gives a
weak equivalence of $\cal R(N)$  
\[
S_N (N \setminus Q_1) \simeq D(\nu_i) \cup_{S(\nu_i)} N =: T_N(\nu_{i}) \, ,
\]
where the right side is the fiberwise Thom space of $\nu_i$ over $N$.

Stably, we can identify $\nu_i$ with the virtual bundle $\xi_i := f^*\tau_N - \tau_{Q_i}$,
given by the difference of tangent bundles. We
write $T_N(\xi_i)$ for the associated fiberwise Thom spectrum. With these notational
changes, $e(f)$ can be regarded as residing
in the cohomology group
\begin{equation} \label{eqn:cohomology-zero}
H^0(P;{\cal W}_j \smsh_N 
(\underset{i\in J\quad} {\smsh_N} T_N (\xi_i)))\, .
\end{equation}
The remainder of the proof of Theorem \ref{main-thm} 
will involve application of Poincar\'e duality \ref{thm:duality}
to the cohomology group \eqref{eqn:cohomology-zero}.

\subsection{The Euler characteristic}
By Poincar\'e duality \ref{thm:duality}, $e(f)$ corresponds to a homology class 
\[
\chi(f) \in H_0(P; {}^{-\tau_P}\!f^*({\cal W}_j   \smsh_N (\underset{i\in J\quad} {\smsh_N} T_N (\xi_i))))\, .
\]
Using the induction isomorphism (\S\ref{subsec:induction}), the group where $\chi(f)$ resides can alternatively
be written as
\[
H_0(N; f_* {}^{-\tau_P}\!f^*({\cal W}_j  \smsh_N (\underset{i\in J\quad} {\smsh_N} T_N (\xi_i))))\, .
\]
By definition, the latter is the stable homotopy group in degree
zero of the spectrum
\[
%\label{spectrum-to-simplify} 
({\cal W}_j  \smsh_N T_N(-\tau_P) \smsh_N (\underset{i\in J\quad} {\smsh_N} T_N (\xi_i))) /N \, .
\]
Using Lemma \ref{Whitney-sum-formula} in virtual form, 
we deduce that the fiberwise spectrum  
\[
\cal W_j \smsh_N T_N(-\tau_P) \smsh_N (\underset{i\in J\quad} {\smsh_N} T_N (\xi_i)))
\]
can be rewritten up to homotopy as
\[
\cal W_j\smsh_N T_N(\xi)\, ,
\]
where $\xi$ is the virtual bundle over $E(P,Q_\bullet)$ that
was defined  in \S\ref{intro}.

Recall that $\cal W_j$ is just the fiberwise wedge of $(j-1)!$-copies of the fiberwise
spectrum $N \times S^{2-2j}$. From this we infer
\[
(\cal W_j \smsh_N T_N(\xi))/N \,\, \simeq\,\,  \bigvee_{(j-1)!} \Sigma^{2-2j} E(P,Q_\bullet)^\xi\, .
\]
Since
\[
\pi_0(\Sigma^{2-2j} E(P,Q_\bullet)^\xi) \cong \Omega_{2j-2}(E(P,Q_\bullet);\xi)\, ,
\]
we have deduced that the obstruction $\chi(f)$ resides in the abelian group
\[
\bigoplus_{(j-1)!}  \Omega_{2j-2}(E(P,Q_\bullet);\xi)\, .
\]
By Proposition \ref{prop:factorize}, $\chi(f)$ vanishes whenever $f\: P \to \holim_{S\subsetneq J} (N\setminus Q_S)$ admits
a homotopy factorization through $N \setminus Q_{J}$.
Conversely, if $p \le 1+\mu +\Sigma$, then $\chi(f) = 0$ we have shown
there is such a factorization of $f$.
\end{proof}

\begin{proof}[Proof of Theorem \ref{main:solution-space}]
Given a multi-relative intersection problem $f$, 
recall that the solution space $\cal L(f)$ is the space of homotopy factorizations of $f$
of the form 
\[
P \to N\setminus Q_J \to \holim_{S\subsetneq J} N\setminus Q_S\, ,
\]
where we have suppressed the lifting homotopy.
Consider the $\infty$-cocartesian square of spaces
\begin{equation} \label{eqn:solution-space-sequence}
\xymatrix{
N\setminus Q_J  \ar[r] \ar[d] & \holim_{S\subsetneq J} N\setminus Q_S \ar[d]
\\
N \ar[r] & C(N\setminus Q_\bullet)
}
\end{equation}
whose horizontal maps are $(1+\Sigma)$-connected (by the higher 
Blakers-Massey theorem applied to the $j$-cube $N \setminus Q_\bullet$ \cite[thm.~2.5]{Good}),
and whose vertical maps are $(1 + \mu)$-connected. By the  Blakers-Massey theorem,
the square is $(1+\mu+\Sigma)$-cartesian. Hence, if 
$\cal F$ is defined as the homotopy pullback of 
the diagram given by deleting $N \setminus Q_J$ from the square, then the map
$N \setminus Q_J\to \cal F$ is $(1+\mu+\Sigma)$-connected. 

Suppose that the given 
 multi-relative intersection problem comes equipped with a
 preferred solution $\hat f\: P \to N\setminus Q_J$
(where again the lifting homotopy is suppressed).
 The solution gives a preferred null-homotopy of the composite
\begin{equation} \label{eqn:null-map}
P @>f>> \holim \limits_{S\subsetneq T} N\setminus Q_S \to C(N\setminus Q_\bullet)
\end{equation}
as a morphism of $\cal T(N)$.

In other words, we have a map
\begin{equation} \label{eqn:the-map}
\cal L(f) \to \cal N(f)
\end{equation}
where $\cal L(f)$ is the solution space and $\cal N(f)$ is the space of null-homotopies 
of the composite \eqref{eqn:null-map}.
With respect to the preferred basepoint of 
$\cal L(f)$, this is a map of based spaces. 

Furthermore, 
$\cal N(f)$ can be interpreted as
the moduli space of homotopy factorizations of $f$ of the form 
\[
P \to \cal F \to \holim \limits_{S\subsetneq T} N\setminus Q_S\, .
\]
Since the map $N \setminus Q_J \to \cal F$ is $(1+\mu+\Sigma)$-connected, we infer by elementary
obstruction theory that the map $\cal L(f) \to \cal N(f)$ is $(1-p+\mu +\Sigma)$-connected.
The rest of the proof involves identifying $\cal N(f)$.

On the one hand, rather than considering null-homotopies in $\cal T(N)$, we
can equivalently add a disjoint copy of $N$ to $P$ to get
a null-homotopy in $\cal R(N)$ of the associated morphism
\begin{equation} \label{eqn:null-map+}
P^+ \to C(N\setminus Q_\bullet)\, .
\end{equation}
Then $\cal N(f)$ can be equivalently defined as the mapping space of 
null-homotopies of \eqref{eqn:null-map+} in $\cal R(N)$.

On the other hand, the (derived) mapping space
\[
\hom_{\cal R(N)}(\Sigma_N P^+,C(N\setminus Q_\bullet))
\]
acts on the space $\cal N(f)$ (this is the fiberwise analogue of the classical fact that
for a null-homotopic map of spaces $X\to Y$, the moduli space of 
null-homotopies, i.e., the space of extensions of the map 
to the cone on $X$, is a torsor
over the space of maps $\Sigma X \to Y$). The orbit of the basepoint
of $\cal N(f)$ with respect to this action gives a preferred weak equivalence
\[
\hom_{\cal R(N)}(\Sigma_N P^+,C(N\setminus Q_\bullet)) \simeq \cal N(f)\, .
\]
Using the adjunction between $\Sigma_N$ and $\Omega_N$, we infer that there is
a preferred $(1-p+\mu+\Sigma)$-connected (weak) map
\begin{equation}\label{eqn:almost-there}
\cal L(f) \to \hom_{\cal R(N)}(P^+,\Omega_N C(N\setminus Q_\bullet)) \, .
\end{equation}
By Theorem \ref{thm:also-hard}  
we also have a $(2+\mu+\Sigma)$-connected map
\[
C(N\setminus Q_\bullet) \to  \Omega^\infty_N (\cal W_j \smsh_N T_N(\xi))\, .
\]
Applying to the latter the fiberwise loop functor
$\Omega_N$, then applying $\hom_{\cal R(N)}(P^+,{-})$, 
and composing with \eqref{eqn:almost-there}
we get a  $(1-p+ \mu+\Sigma)$-connected (weak) map
\begin{equation} \label{eqn:change-target}
\cal L(f) \to \hom_{\cal R(N)}(P^+,\Omega^{\infty+1}_N (\cal W_j \smsh_N T_N(\xi))) \, .
\end{equation}
By definition, the target of the map \eqref{eqn:change-target} is identified with the 
infinite loop space associated with the cohomology spectrum
\[
H^\bullet(P;\Sigma^{-1}_N\cal W_j \smsh_N T_N(\xi))\, .
\]
By the Poincar\'e duality argument appearing in the proof of Theorem \ref{main-thm} above, this spectrum is weakly equivalent to 
\[
\bigvee_{(j-1)!} E(P,Q_\bullet)^{\xi+(1-2j)\epsilon}\, .
\]
Assembling, we have produced a $(1-p+\mu+\Sigma)$-connected (weak) map
\[
\cal L(f) \to \prod_{(j-1)!}\Omega^{\infty} E(P,Q_\bullet)^{\xi+(1-2j)\epsilon})\, .
\]
This completes the proof of Theorem \ref{main:solution-space}.
\end{proof}

\subsection{The euclidean case}
When $N=\Bbb R^n$ we have a  corollary to Corollary
\ref{cor:total-fiber}. Consider an embedding $Q_J \subset \Bbb R^n$
where now each $Q_i$ is a manifold admitting a handle decomposition with 
handles having index at most $q_i$ where $n-q_i \ge 3$.

Consider the $j$-cubical diagram
$\Bbb R^n \setminus Q_\bullet$. Choose a basepoint in $\Bbb R^n \setminus
Q_J$. Then the $j$-cube is based and we consider its total homotopy fiber,
\[
\Phi(\Bbb R^n \setminus Q_\bullet)\, .
\] 
For $A\subset \Bbb R^n$ let $A^\ast = \Bbb R^n \setminus A$ denote
its complement.

\begin{cor} \label{cor:important} There is a $(1+\mu+\Sigma)$-connected map
\begin{equation} \label{eqn:important}
\Phi(\Bbb R^n \setminus Q_\bullet) @>>> \prod_{i=1}^{(j-1)!} 
\Omega^\infty(\Sigma^{1-j} Q_1^\ast \smsh \cdots \smsh  
Q_j^\ast)
\end{equation}
where $\mu = \min_i(n-q_i-2)$ and $\Sigma = \sum_i (n-q_i-2)$\end{cor}

%\begin{cor} \label{cor:important} There is a $(1+\mu+\Sigma)$-% connected map
% \begin{equation} \label{eqn:important}
% \Phi(\Bbb R^n \setminus Q_\bullet) @>>> \prod_{i=1}^{(j-1)!} 
%\Omega^\infty(\Sigma^{j(n-2)+1} Q_1^{-\tau_1} \smsh \cdots \smsh  Q_j^{-\tau_j})
% \end{equation}
% where $\mu = \min_i(n-q_i-2)$ and $\Sigma = \sum_i (n-q_i-2)$ and % $\tau_i$ is the
%tangent bundle of $Q_i$.
%\end{cor}

\begin{rem}\label{rem:important} The target of the map \eqref{eqn:important} may also be identified with the infinite loop space
associated with the wedge of  $(j-1)!$-copies of the spectrum
\[
\Sigma^{1-jn} D_+(Q_1\times \cdots \times Q_j)
\]
where  $D_+(X) = F(X_+,S^0)$ is the Spanier-Whitehead dual of $X_+$. 
\end{rem}

\section{Proof of Theorem \ref{thm:also-hard}\label{sec:proofs} }

The proof of  Theorem \ref{thm:also-hard} relies on basic
results arising in the calculus of the identity functor
 which we now summarize.
Let 
\[
\mathbb I\:  \cal T \to \cal T
\]
be the identity functor. By \cite{Good3}
 one has a tower of natural transformations
\[
\cdots \to P_2\mathbb I \to P_1\mathbb I   \to P_0 \mathbb I  = \ast
\]
and compatible natural transformations $\Bbb I \to P_j\mathbb I $.
Furthermore the functor $P_j\Bbb I$ is $j$-excisive in the sense that it transforms strongly cocartesian $(j+1)$-cubes into $\infty$-cartesian ones.
In what follows, we abbreviate notation by setting
$P_j := P_j\mathbb I $.

If $Y$ is $r$-connected, then the map $Y \to P_jY$ 
is $(jr+1)$-connected. In particular, when $r > 0$, the map
\[
Y \to \lim_{j\to\infty} P_jY
\]
is a weak homotopy equivalence. 

If $Y$ is a based space, then
the $j$-th layer of the tower, that is the homotopy fiber of
 $P_jY \to P_{j-1}Y$, 
is isomorphic in the homotopy category of functors to the infinite loop space valued functor
\[
Y \mapsto \Omega^\infty \Bbb D_j Y\, ,
\]
where $\Bbb D_j$ takes values in spectra. 

The functor $\Bbb D_j$
is classified by
a certain spectrum with $\Sigma_j$-action, 
denoted $L_j$, whose underlying homotopy type is that of  a wedge of $(j-1)!$ copies of the $(1-j)$-sphere spectrum \cite{Johnson}, \cite[p.~706]{Good3}. Then
\begin{equation} \label{eqn:id-coeff}
\Bbb D_j Y \, \, \simeq \,\, L_j \smsh_{h\Sigma_j} Y^{[j]}\, ,
\end{equation}
where $ Y^{[j]}$ denotes the $j$-fold fiberwise smash product 
$Y$.  This description of $\Bbb D_j $ enables one to
extend its domain of definition to the category of spectra, i.e., if $A$ is a spectrum then $\Bbb D_jA$ is the spectrum
$L_j \smsh_{h\Sigma_j} A^{[j]}$.

\begin{rem} The maps of the tower
$P_j Y \to P_{j-1}Y$ are principal fibrations in the sense that
there is a homotopy fiber sequence 
\[
P_jY \to P_{j-1} Y \to B D_jY\, ,
\]
where $B D_j Y$ is the delooping of $D_jY$ given by 
$\Omega^\infty(\Sigma \Bbb D_j Y)$ (cf. \cite[p.~653]{Good3}).
\end{rem}

We now  consider the strongly cocartesian $j$-cube 
$A_\bullet$ of $\cal R(X)$. Assume for now that $X$ is contractible.
Without loss in generality we can replace $X$ by the one point space. The assignment $S\mapsto P_kA_S$ defines a $j$-cube denoted $P_kA_\bullet$. A choice of basepoint in $A_J$ equips $A_\bullet$ with the structure of a based $j$-cube. Then $D_k A_\bullet$ is
a $j$-cube of infinite loop spaces. Let 
\[
\text{fib}(D_k(A_\bullet))
\]
denote its total homotopy fiber.

\begin{prop} \label{prop:key-decomp} The total homotopy fiber of $D_k A_\bullet$ is $(\mu+\Sigma)$-connected if $k \ge j+1$. Furthermore, 
when $k=j$ there is a $(1+\mu+\Sigma)$-connected map
\[
\text{\rm fib} (D_j(A_\bullet)) \to \Omega^\infty (L_j \smsh A_1 \smsh \cdots \smsh A_j) \, .
\]
\end{prop}

\begin{proof} 
Suppose first that $A_\bullet$ is a wedge cube on 
the based spaces $X_1,\dots,X_j$. Then $X_i$ is
$r_i$-connected. 
Using \eqref{eqn:id-coeff}, the total homotopy fiber of $D_k(A_\bullet)$ may be identified with the infinite loop space associated with
 total homotopy fiber of the $j$-cube of spectra 
\begin{equation} \label{eqn:jcubexs}
 S\mapsto L_k \smsh_{h\Sigma_k} X_S^{[k]}\, ,
\end{equation} 
 where $X_S$ is the wedge of the spaces $X_i$ for $i\in S$.
Applying the binomial theorem to expand $X_S^{[k]}$, direct calculation shows that the total homotopy fiber of \eqref{eqn:jcubexs}
decomposes into a wedge  of terms of the form
 \begin{equation}\label{eqn:decomp-wedge-smash}
L_k \smsh_{h\Sigma_{s_\bullet}} 
(X_1^{[s_1]} \smsh \cdots \smsh X_j^{[s_j]}) \, ,
 \end{equation}
 where 
\begin{itemize}
\item $\sum_i s_1 = k$ with $s_i \ge 1$ for all $i$.
\item $\Sigma_{s_\bullet} := 
\Sigma_{s_1} \times \cdots \times \Sigma_{s_j} \subset \Sigma_k$. 
\end{itemize}
If $k \ge j+1$ then there is always at
least one term $s_i \ge 2$.
It follows that the displayed spectrum is at least 
$(\mu+\Sigma)$-connected. Hence,  the total homotopy fiber $\text{fib} (D_k(A_\bullet))$ is
also $(\mu+\Sigma)$-connected when $k \ge j+1$.  

When $k=j$, 
we can ignore those terms in which $s_i \ge 2$ since they are highly connected: 
the projection away from those terms produces the
$(1+\mu+\Sigma)$-connected map
\[
\text{fib} (D_j(A_\bullet)) \to \Omega^\infty (L_j \smsh A_1 \smsh \cdots \smsh A_j)\, .
\]
This completes the proof in the case of wedge cubes.

Turning to the general case, we use the fact that $\Bbb D_k$ 
is defined on the category of spectra. 
By Lemma \ref{lem:wedge-cube}, the
$j$-cube of spectra $\Sigma^\infty A_\bullet$ is a weakly equivalent to a wedge cube on the spectra $\Sigma^\infty A_1,\dots,
\Sigma^\infty A_j$. Replacing the spaces $X_i$ of the previous case by the spectra $\Sigma^\infty A_i$ and making 
the same kind of calculation, the conclusion follows.
\end{proof}

\begin{cor} \label{cor:k+1tok} Assume that $X$ is
contractible and $k \ge j+1$. Then  $(j+1)$-cube
\[
P_k A_\bullet \to P_{k-1} A_\bullet
\]
is $(1+\mu + \Sigma)$-cartesian.
\end{cor}

\begin{prop} \label{prop:AtoP_j} Assume that $X$ is
contractible. 
Then the $(j+1)$-cube
\[
A_\bullet \to P_j A_\bullet
\]
is $(1+ \mu+\Sigma)$-cartesian.
\end{prop}

\begin{proof}
  If $r_i \ge 1$ for all $i$, the result follows easily from 
induction, Corollary \ref{cor:k+1tok} and the convergence
of the  tower for the identity functor for 1-connected spaces. 
In the general case one must proceed differently using
the higher Blakers-Massey theorem.
We  are indebted to the referee for communicating
 the following argument.

We first recall how  $Y\mapsto P_jY$ is defined in terms of an auxiliary
functor $Y\mapsto T_jY$ as in \cite[\S1]{Good3}.
The latter  is given by the taking
 homotopy limit of the functor
\[
U \mapsto Y\ast U
\]
where $\ast$ means topological join and $U$ ranges over the poset of nonempty subsets of $\{1,\dots,j+1\}$. There is an evident natural transformation $Y \to T_jY$ and $P_jY$ is defined to be the homotopy colimit of the diagram 
\[
Y \to T_jY \to T_j^2 Y \to \cdots\, .
\]
For the rest of the proof we set 
$\underline k = \{1,2,\dots,k\}$ to avoid notational clutter.

We first determine how cartesian
the $(j+1)$-cube $A_\bullet \to T_jA_\bullet$ is. 
This is the same as asking the degree to which the $(2j+1)$-cube
\[
(T,U) \mapsto A_T \ast U
\]
is cartesian where $T\subset \underline j$ and $U \subset \underline{j+1}$ (note: by our conventions this functor is contravariant in the first variable and covariant in the second).

 For fixed $T$,  the 
$(j+1)$-cube $U \mapsto A_T \ast U$ is strongly cocartesian.
Similarly, for fixed $U$, the $j$-cube $T \mapsto A_T \ast U$ is
strongly cocartesian. Any pair $(T,U)$ corresponds to subcube whose initial term is $A_T\ast U$. It follows that
this subcube will be $\infty$-cocartesian whenever 
$|T| \ge 2$ or $|U|\ge 2$.  Consequently, there are three remaining types of pairs $(T,U)$ to consider:
\begin{enumerate}
\item $|T| = 1, |U| = 0$;  
\item $|T| = 0, |U|=1$;
\item $|T|=|U| = 1$;
\end{enumerate}
By inspection, one finds for a type (1) pair that the subcube is 
$(r_i+1)$-cocartesian. Similarly, for a type (2) pair the  subcube is $(\mu+1)$-cocartesian and for a type (3) pair the subcube is
$(r_i+2)$-cocartesian.  

Given a partition of $\underline j \amalg \underline{j+1}$ consisting of sets of these types only, the sum of these numbers indexed over the sets of the partition is given by 
\begin{equation} \label{eqn:partition-sum}
\Sigma +j + D + (j+1-D)(\mu+1)\, ,
\end{equation}
where $D$ is the number of times a set of type (3)
occurs in the partition. To see this, note that
any such partition is determined by  a choice of injections
$a\: \underline D \to \underline j, b\:\underline D \to \underline j$,
in which the complement of image of $a$ defines the
type (1) singletons of the partition and the complement of the image of $b$ defines the singletons of type (2).
Hence, the sum of the numbers for such a partition is given by
\[
\sum_{i \notin a(\underline D)} (r_i + 1) + 
\sum_{i \notin b(\underline D)} (\mu_i + 1)  +
\sum_{i \in a(\underline D)} (r_i+2)\, ,
\]
which clearly coincides with the expression \eqref{eqn:partition-sum}.

Observe that \eqref{eqn:partition-sum} achieves a minimum when $D$ is at its maximal value $j$.  It follows that the minimal value is $1+\mu + \Sigma + 2j$. 
Since we are dealing with a $(2j+1)$-cube, we subtract $2j$ to get
$1+\mu + \Sigma$, which is how cartesian the cube is by
\cite[th.~2.5]{Good}. Hence, the $(j+1)$-cube
$A_\bullet \to T_jA_\bullet$ is $(1+\mu + \Sigma)$-cartesian.

The next step is to consider 
$T_j A_\bullet \to T_{j+1}^2A_\bullet$.
For each fixed non-empty  $U \subset \underline{j+1}$, the 
map of $j$-cubes
\[
A_\bullet \ast U \to T_j(A_\bullet \ast U)
\]
is of the kind we considered above with the number $r_i$ increased by one (so $\Sigma$ is increased by $j$) and $\mu$ increased by $1$.
 Hence, the corresponding $(j+1)$-cube
is $(1+ (1+\mu)+(\Sigma+j))$-cartesian. Moreover, 
taking the homotopy limit over $U$
yields the map of $j$-cubes
$T_j A_\bullet \to T_j^2A_\bullet$. 
In taking this homotopy limit the degree to which
the latter is cartesian 
is decreased by $j$. We infer that $T_j A_\bullet \to T_j^2A_\bullet$ is $(2+\mu+\Sigma)$-cartesian, which is one better
than the estimate we obtained for $A_\bullet \to T_jA_\bullet$.
Repeating this argument, we infer that 
$
T^k_j A_\bullet \to T_j^{k+1}A_\bullet
$
is $(k+1+\mu+\Sigma)$-cartesian for any $k \ge 0$. It follows
that $A_\bullet \to P_jA_\bullet$ is $(1+\mu+\Sigma)$-cartesian.
\end{proof}

%\begin{lem} \label{lem:DjtoP_j}
% The $(j+1)$-cube 
% \[
% D_jA_\bullet \to P_j A_\bullet
% \]
%is $\infty$-cartesian.
%\end{lem}

%\begin{proof}
%A basic property of
% $P_{j-1}$ takes strongly cocartesian $j$-cubes to
% strongly cartesian $j$-cubes.  The result follows immediately from % the fibration sequence
% \[
% D_jA_\bullet \to P_j A_\bullet \to P_{j-1} A_\bullet
% \]
% \end{proof}

\begin{proof}[Proof of Theorem  \ref{thm:also-hard}]
The proof is a verification in two cases.
\medskip

\noindent {\it Case 1:} $X$ is contractible.
There is no loss in generality in assuming that $X$ is a point. Equip $A_J$ with a basepoint. Then
$A_\bullet$ is a $j$-cube of $1$-connected based spaces.

Consider the  commutative diagram
\[
\xymatrix{
P_jA_J \ar[d]_{b_1} \ar[r]^{a_1} & 
P_{j-1} A_J \ar[d]_{b_2}^{\simeq}
\ar[r]^{a_2} & BD_jA_J \ar[d]^{b_3} \\
\holim_{S \subsetneq  J}   P_j A_S\ar[r]_{a_3} 
& 
 \holim_{S \subsetneq  J}   P_{j-1} A_S 
 \ar[r]_{a_4} &  \holim_{S \subsetneq  J} 
 BD_jA_S 
}
\]
in which the top and bottom rows form fibration sequences.
The map $b_2$ is a homotopy equivalence since $P_{j-1}$ is $(j-1)$-excisive.
The map $b_3$ is equivalent to a principal fibration in the following sense: it may be identified with  the map of infinite loop spaces arising from the map of spectra
\[
\Sigma \Bbb D_j(A_\bullet) \to \holim_{S \subsetneq  J} 
\Sigma \Bbb D_j(A_S)
\]
associated with the
$j$-cube $\Sigma \Bbb D_j(A_\bullet)$. 

Set $W_j := \Sigma^{1-j} L_j$.
By Proposition \ref{prop:key-decomp}
there is 
a $(2+\mu+\Sigma)$-connected map of spectra 
\begin{equation} \label{eqn:really-good-map}
\Sigma \text{fib}(\Bbb D_j(\Sigma^\infty A_\bullet)) \to  
W_j \smsh SA_1 \smsh \cdots \smsh SA_j\, ,
\end{equation}
where we have implicitly made the identification
$\Sigma (L_j \smsh A_1 \smsh \cdots \smsh A_j)   \simeq
W_j \smsh SA_1 \smsh \cdots \smsh SA_j$ to avoid
displaying the choice of basepoint. 
The infinite loop space associated with the source of \eqref{eqn:really-good-map} is
identified with 
the homotopy fiber of the map $b_3$.

Consequently, 
\[
BD_jA_J @> b_3 >>  \holim_{S \subsetneq  J} 
 BD_jA_S  @>>> 
 \Omega^\infty (\Sigma W_j \smsh SA_1 \smsh \cdots \smsh SA_j) \, .
 \]
is a homotopy fiber sequence in degrees $\le 2+\mu+\Sigma$.

Hence, by Lemma \ref{lem:tech} below there is a homotopy fiber sequence in degrees $\le 1+\mu+\Sigma$ of the form
\begin{equation}\label{eqn:pj-sequence}
P_jA_J @> b_1 >> \holim_{S \subsetneq  J}   P_j A_S \to  \Omega^\infty (W_j \smsh SA_1 \smsh \cdots \smsh SA_j) \, .
\end{equation}
According to Proposition \ref{prop:AtoP_j}, the square
\[
\xymatrix{
A_J \ar[r]\ar[d] & P_jA_J \ar[d]^{b_1} \\
 \holim_{S \subsetneq  J}  A_S \ar[r] &  \holim_{S \subsetneq  J}   P_{j} A_S 
 }
 \]
 is $(1+\mu+\Sigma)$-cartesian. Let 
 $\holim_{S \subsetneq  J}  A_S \to \Omega^\infty (W_j \smsh SA_1 \smsh \cdots \smsh SA_j)$ be the composition of the bottom map of the square with the second map of \eqref{eqn:pj-sequence}.
Then 
 \begin{equation} \label{eqn:a-sequence}
 A_J \to \holim_{S \subsetneq  J}  A_S \to \Omega^\infty (W_j \smsh SA_1 \smsh \cdots \smsh SA_j)
 \end{equation}
 is also a homotopy fiber sequence in degrees $\le 1+\mu+\Sigma$.
 By the dual Blakers-Massey theorem, we conclude that \eqref{eqn:a-sequence} is
 also a homotopy cofiber sequence in degrees $\le 2+\mu+\Sigma$.
 
Consequently, the induced map
 \[
 C(A_\bullet) \to \Omega^\infty (W_j \smsh SA_1 \smsh \cdots \smsh SA_j)
 \]
 is $(2+\mu+\Sigma)$-connected.
 \medskip

 \noindent {\it Case 2:} $X$ is general.  Let $\tilde X \to X$ be a
 universal principal bundle for $X$ with topological structure group $G$. Then $\tilde X$ is contractible. Let $\tilde A_\bullet$ be the strongly
 cocartesian $j$-cube of $G$-spaces given by the fiber product
 \[
 \tilde A_S := \tilde X \times_X  A_S\, .
 \]
 The terminal vertex of this cube is then contractible, and one checks that the argument in Case 1 preserves equivariance.  It follows that there is a $(2+\mu+\Sigma)$-connected
map of based $G$-spaces
\begin{equation}\label{eqn:equivariant}
C(\tilde A_\bullet) \to 
\Omega^\infty (W_j \smsh S\tilde A_1 \smsh \cdots \smsh S\tilde A_j)\, .
\end{equation}
The result follows  by applying the Borel construction
${-}\times_G \tilde X$ to \eqref{eqn:equivariant} to obtain
a $(2+\mu+\Sigma)$-connected
map of $\cal R(X)$
\[
C(A_\bullet) \to  \Omega^\infty_X (\cal W_j \smsh_X S_X A_1 \smsh \cdots \smsh_X S_X A_j)\, . \qedhere
\]
\end{proof}

The section ends with an elementary result about fibrations that was used in the proof of Theorem  \ref{thm:also-hard}.
Let
\[
\xymatrix{
F_1 \ar[r]\ar[d] & E_1 \ar[r]\ar[d]^{\simeq} & B_1  \ar[d] \\
F_2 \ar[r] & E_2 \ar[r] & B_2 \\
}
\]
be a commutative diagram of connected spaces 
in which the rows are
fibration sequences
 and where the map $E_1 \to E_2$ is a homotopy equivalence. 
 Here $B_1\to B_2$ is a map of based spaces and the fiber
 over the basepoint of $B_i$ is $F_i$.

\begin{lem}\label{lem:tech} Assume in addition that
the map $B_1\to B_2$ sits in a homotopy
fiber sequence $B_1 \to B_2 \to B_3$ in degrees $\le s$. 
Then the map $F_1 \to F_2$ sits in a homotopy fiber sequence
$
F_1 \to F_2 \to \Omega B_3
$
in degrees $\le s-1$.
\end{lem}

\begin{proof} Equip $B_3$ with the basepoint from $B_2$. The composition $E_1 \to E_2 \to B_2 \to B_3$ is null homotopic. Hence 
$E_2 \to B_2 \to B_3$ is also null homotopic.  
Let $E_2\to PB_3$ be adjoint to a null homotopy, where $PB_3$ is
the based path space. Then the diagram 
\[
\xymatrix{
E_1 \ar[r]\ar[d] & E_2 \ar[r]\ar[d] & PB_3 \ar[d]\\
B_1 \ar[r] & B_2  \ar[r] & B_3
}
 \]
commutes. The result follows by taking fibers vertically.
\end{proof}

\section{Multiple disjunction \label{sec:disj}}

Let $P$, $Q_1,\dots, Q_j$ and $N$ be as in \S\ref{intro}. Let
\[
E(P,N)
\] denote the space of smooth embeddings from $P$ to $N$.
Then $S\mapsto E(P,N\setminus Q_S)$ forms a $j$-cube of spaces, denoted $E(P,N\setminus Q_\bullet)$.
The natural transformation from embeddings to functions
\begin{equation} \label{eqn:ef}
E(P,N\setminus Q_\bullet) \to F(P,N\setminus Q_\bullet)
\end{equation}
is a map of $j$-cubes.  One of the main results of \cite{GK2} is:

\begin{thm}[{\cite[thm.~E]{GK2}}] \label{thm:mult-disjunct} Assume $p,q_i \le n-3$.  Then the $(j+1)$-cube 
\eqref{eqn:ef} is $(n-2p-1+\Sigma)$-cartesian.
\end{thm}

\begin{proof}[Proof of Theorem \ref{embedding-result}]
Let 
\[
f\in \holim_{S\neq J} \emb(P,N\setminus Q_S)
\]
be any point. Then $f$ is represented by
a map of $(j+1)$-ads 
\[
\Delta^{j-1} \to E(P,N)\, ,
\] 
where  the $i$-th face of $\Delta^{j-1}$ is constrained 
to map into the subspace $E(P,N\setminus Q_{i+1})$
for $i = 0,1,\dots,j-1$. Note that by forgetting information, we may also regard $f$
as a map $P \to \holim_{S\neq J} N\setminus Q_S$, and therefore we have an associated
multi-relative intersection problem. 
Consequently, Theorem \ref{embedding-result} follows
by combining Theorem \ref{thm:mult-disjunct} with Theorem \ref{main-thm}. \end{proof}
 
\begin{rem} Theorem \ref{embedding-result} is a multi-relative version of 
\cite[thm.~2.2]{H-Q}.
\end{rem}

\section{The embedding tower \label{sec:speculation}}

In \cite[\S13]{KW1} and \cite{KW2}, we described an invariant $\mu(f)$ 
which was shown to be a complete obstruction to regularly homotoping an immersion $f\: P \to N$ to an
 embedding in the metastable range. 
 The goal of this section is to generalize this result beyond the metastable range when $N = \Bbb R^n$ is euclidean space.  
 %The goal of this section is to give
 % a conjecture generalizing this which is valid beyond the metastable % range.

\subsection{Construction of the embedding tower} 
Let  $P$ be a smooth manifold of dimension $p$ without boundary and let $N$ be a smooth manifold of dimension $n$. We let $E(P,N)$ denote the space
of embeddings of $P$ in $N$, defined as the geometric realization of
the simplicial set whose $k$-simplices are the smooth families of
embeddings from $P$ to  $N$ that are parametrized by the standard $k$-simplex.

Assume $P$ is compact.
Let $\cal O_j := \cal O_j(P)$ be the partially ordered set whose elements  are  open subsets $U \subset P$ such that
$U$ is diffeomorphic to $\Bbb R^p \times T$, where $T$ is a set of cardinality at most $j$. A morphism $U \to V$ is given by an inclusion of subsets. The  $j$-th stage of the  Goodwillie-Weiss embedding tower is defined by 
\[
E_j(P,N) := \holim_{U \in \cal O_j} E(U,N)\, .
\] 
The inclusion $\cal O_{j-1} \to \cal O_j$ induces a map
$E_j(P,N) \to E_{j-1}(P,N)$. The map $E(P,N) \to E_j(P,N)$
is given by restricting  embeddings
to elements of $\cal O_j$.

If $p \le n-1$ then $E_1(P,N)$ is homotopy equivalent to $I(P,N)$, the space of immersions from $P$ to $N$ (by a reformulation of Smale-Hirsch theory). Hence,  a basepoint of $E_1(P,N)$
amounts to selecting an immersion $P \to N$ up to contractible choice.
In what follows, we fix such a basepoint and define
\[
\bar E_j(P,N) := \text{fiber}(E_j(P,N) \to E_1(P,N))\, .
\]
It follows that the square
\begin{equation} \label{eqn:bar-unbar}
\xymatrix{
\bar E_j(P,N) \ar[r] \ar[d] & E_j(P,N) \ar[d] \\
\bar E_{j-1}(P,N) \ar[r] & E_{j-1}(P,N)
 }
 \end{equation}
 is homotopy cartesian. Furthermore, the tower $\{\bar E_j(P,N)\}$
 is the manifold calculus tower associated with 
 the functor $U \mapsto \bar E(U,N)$, where $U$ varies throughout the open subsets of $P$. Call this the {\it reduced embedding tower}.
 Note that $\bar E_1(P,N)$ is the one-point space.
 
 \subsection{Configuration spaces} For a set $J$ of cardinality $j$,
set
\[
E_J(N)  := E(J,N)\, .
\] 
If we equip $J$  with a total ordering, then
$E_J(N)$ is 
the configuration space
of finite ordered subsets of $N$ of cardinality $j$. 
A choice of embedding $J \to N$ equips $E_J(N)$ 
with a baspoint.
To each $T\subset U\subset J$ there is  a projection map
$E_U(N) \to E_T(N)$. These assemble into a $j$-cube of based spaces 
$E_\bullet(N)$.

\begin{lem}\label{lem:cartesian-config}
The $j$-cube $E_\bullet(N)$ is $(j-1)(n-2)+1$-cartesian.
\end{lem}

\begin{proof} 
The $j$-cube $E_\bullet(N)$ can be written as a map of 
 $(j-1)$-cubes
 \[
 E_{S\cup 1}(N) \to E_S(N)
 \]
 where $S\subset J_1 := \{2,\cdots,j\}$. 
 The displayed map is a
 fibration  whose fiber at the basepoint is the based space $N\setminus S$.  These form a strongly cocartesian
 $(j-1)$-cube $N_\bullet$, all of whose maps 
 are $(n-1)$-connected. Then $N_\bullet$ is $(j-1)(n-2)+1$-cartesian
 by the higher Blakers-Massey theorem. 
\end{proof}

\subsection{The unstable obstruction}
For $j \ge 2$ let
$
\binom{P}{j}
$
denote the configuration space of subsets $S\subset P$ of cardinality $j$.  
Over this space we consider two fibrations.
The first fibration
\[
E \to \tbinom{P}{j}
\]
has fiber over $S\in \binom{P}{j}$
 given by  the configuration space $E_S(N)$. 
 
The second fibration
\[
D \to \tbinom{P}{j}
\]
has fiber over $S$ given by
$\holim_{T\subsetneq S} E_T(N)$.

Then one has an evident map of fibrations
\begin{equation} \label{eqn:map-fibration-config}
E \to D\, .
\end{equation}
A point $x\in E_{j-1}(P,N)$  determines a section $t = t(x)$
of $D \to \binom{P}{j}$. It also determines a 
partial section $s = s(x)$ of $E  \to \binom{P}{j}$ along an open collar  of the boundary of a compactification of $\binom{P}{j}$.  The sections agree with respect to the map \eqref{eqn:map-fibration-config}.

The following is essentially just a reformulation of Weiss' description of the layers of the embedding tower.

\begin{lem} \label{lem:lift-lemma} Assume $j \ge 2$. The
homotopy fiber of $\bar E_j(P,N) \to \bar E_{j-2}(P,N)$ taken at $x$
is homotopy equivalent to the space of sections of 
$E \to \binom{P}{j}$ which are compatible with $t$
and which coincide with $s$ near infinity.
In particular, $x$ lifts to a point of $E_j(P,N)$ if and only if this section space is non-empty.
\end{lem} 

\begin{rem}  Another formulation of the lemma is that the square
\[
\xymatrix{
\bar E_j(P,N) \ar[r]\ar[d] & \Gamma(E)\ar[d]\\
\bar E_{j-1}(P,N) \ar[r] & \Gamma_\infty(E)
\times_{\Gamma_\infty(D)} \Gamma(D)
}
\]
is a $\infty$-cartesian, where $\Gamma$ denotes the space of sections and $\Gamma_\infty$ denotes the space of germs of sections near infinity. 
\end{rem}

\begin{proof}[Proof of Lemma \ref{lem:lift-lemma}]  
Given $x$, define a third fibration 
\[
F \to \tbinom{P}{j}
\]
whose fiber at $S$  
 is the total homotopy fiber of the cube
 $T \mapsto E_T(N)$ for $T\subset S$. Denote this fiber by 
 $\Phi_S(N;x)$.  It is an unbased space.
 Note that $\Phi_S(N;x)$
 is well-defined since when $T\subsetneq S$, 
each of the spaces
 $E(T,N)$ is based using $x$. 
 
 Moreover $x$ gives a partial section of this fibration at infinity.  Weiss shows that the space of compactly supported sections of this fibration (i.e., the space of sections agreeing with the partial section near infinity) coincides with the homotopy fiber of $\bar E_j(P,N) \to \bar E_{j-1}(P,N)$  at $x$.
The latter space is homotopy equivalent to the space in the statement of the lemma.
\end{proof}

\subsection{A cohomological obstruction}
If we  suspend the fibers of $D \to \binom{P}{j}$,
then the obstruction to finding a compactly supported section lies in a spectrum cohomology group. If certain dimensional restrictions
are present, then nothing is lost in suspending.

When $X$ is an unbased space, we define its suspension spectrum
be the homotopy fiber of the map of spectra
$\Sigma^\infty X_+ \to S^0$ that is induced by the map from $X$ to the one point space. By slight abuse in notation, denote the
 homotopy fiber by $\Sigma^\infty X$.

\begin{defn} Let 
\[
\cal D \to \tbinom{P}{j}
\]
be the fiberwise spectrum whose fiber at $S$ given by 
$\Sigma^\infty \Phi_S(N;x)$. This comes equipped with a section near infinity. Note that $\cal D$ depends on the choice of
$x$. 
\end{defn}

The total obstruction $e(x)$ to finding  a compactly supported section 
of $\cal D$
lies in $\pi_{-1}$ in the spectrum of compactly supported sections, that is 
\[
e(x) \in H^{-1}_{\text{\rm cs}}(\tbinom{P}{j};\cal D)\,  .
\]

\begin{lem}\label{lem:e-vanish} If $x\in E_{j-1}(P,N)$ lifts to $E_j(P,N)$, then
$e(x)$ vanishes. The converse is true provided that 
$2(j-1)(n-2) - jp + 1 \ge 0$.
\end{lem}

\begin{proof} The if part is clear. For the converse,
one observes that the map $\Phi_S(N;x) \to \Omega^\infty \Sigma^\infty \Phi_S(N;x)$ is $(2(j-1)(n-2) + 1)$-connected 
using the Freudenthal suspension theorem 
and fact that $\Phi_S(N;x)$ is $(j-1)(n-2)$-connected by 
Lemma \ref{lem:cartesian-config}.
It follows that the map of compactly supported section spaces
is $(2(j-1)(n-2)-jp + 1)$-connected.
\end{proof}

\subsection{Highly connected manifolds}
When $N$ is highly connected, the obstruction
to lifting simplifies considerably.

\begin{defn}
For $S\subset \binom{P}{j}$ let
\[
C_S(N)
\]
denote the mapping cone of the map 
\[
E_S(N)) \to \holim_{T\subsetneq S} E_T(N)\, .
\]
\end{defn}

\begin{rem}
In contrast with $\Phi_S(N;x)$, the space
$C_S(N)$ doesn't depend on $x$ and it has a preferred basepoint.\end{rem}

\begin{lem}\label{lem:key-cart} Assume $j \ge 2$ and $N$ is $r$-connected,  where $r \le n-2$. Then the square 
\[
\xymatrix{
E_S(N) \ar[r]\ar[d] & \holim_{T\subsetneq S} E_T(N) \ar[d] \\
C \ar[r] &  C_S(N)
}
\]
is $(j-1)(n-2)+r+1)$-cartesian, where $C$ is the cone on $E_S(N)$.
\end{lem}

\begin{proof} By definition, the square is $\infty$-cocartesian.
Furthermore, the map 
$E_S(N) \to \holim_{T\subsetneq S} E_T(N)$
is $((j-1)(n-2)+1)$-connected by Lemma \ref{lem:cartesian-config}. 

Since $N$ $r$-connected and $r\le n-2$, it follows
 that  $E_S(N)$ is $r$-connected. Hence the left vertical map
 is $(r+1)$-connected.
The conclusion now 
follows from the Blakers-Massey theorem.
\end{proof}

Let 
\begin{equation}\label{eqn:q-fibration}
\cal C \to \tbinom{P}{j}
\end{equation}
 be the fiberwise spectrum whose fiber at $S$ is 
 $\Sigma^\infty C_S(N)$. 
 This fiberwise spectrum doesn't depend on $x$.
 
 The section $t$ induces another section of \eqref{eqn:q-fibration}, call it $t'$. The latter section is homotopic to the zero section near infinity. Then an obstruction to 
 lifting $x\in E_{j-1}(P,N)$ to $E_j(P,N)$ is given  by the associated compactly supported spectrum cohomology class of $t'$:
 \[
 e'(x) \in H^0_{\text{cs}}(\tbinom{P}{j}; \cal C)\, .
 \]

\begin{lem} \label{lem:euclid} Assume $j \ge 2$ and $N$ is $r$-connected with $r\le n-2$.
If $x$ lifts to an element of $E_j(P,N)$, then
$e'(x)$ vanishes. Furthermore, the converse holds if
 $r\ge p-1-(j-1)(n-p-2)$.
\end{lem}

\begin{proof}  The proof uses the commutative square
\[
\xymatrix{
\Sigma\Phi_S(N;x) \ar[r] \ar[d] & C_S(N) \ar[d] \\
\Omega^\infty \Sigma^\infty \Phi_S(N;x) \ar[r]  & \Omega^\infty \Sigma^\infty C_S(N) 
}
\]
Since $C_S(N)$ and $\Sigma\Phi_S(N;x)$ 
are $((j-1)(n-2)+1)$-connected (by Lemma \ref{lem:cartesian-config})
 the vertical maps are  $(2(j-1)(n-2)+3)$-connected by the Freudenthal suspension theorem.
 
By Lemma \ref{lem:key-cart}, the 
horizontal maps are 
$((j-1)(n-2)+r+2)$-connected. Hence,
the composite  
\[
\Sigma\Phi_S(N;x) \to C_S(N)\to \Omega^\infty\Sigma^\infty C_S(N)
\]
is $((j-1)(n-2)+r+2)$-connected.
By elementary obstruction theory the obstructions
$e'(x)$ and $e(x)$ contain the same information when $jp < (j-1)(n-2)+r+2$, that is, when $r\ge p-1-(j-1)(n-p-2)$.
\end{proof}

\begin{cor} \label{cor:euclid} Assume $j\ge 2$. If $N$ is contractible, then $x$ lifts to an element of $E_j(P,N)$ if and only if $e'(x)=0$.
\end{cor}

\begin{proof} In this case we can take $r=n-2$. Then the inequality of Lemma \ref{lem:euclid} becomes $n-2 \ge p-1-(j-1)(n-p-2)$
which is automatically satisfied because $p \le n-3$.
\end{proof}

\subsubsection{Equivariant reformulation} 
Set $J := \{1,\dots,j\}$. Then the map $E_J(P) \to \binom{P}{j}$
which assigns to an embedding its image is a regular covering space
with structure group $\Sigma_j$, where the latter acts on $E_J(P)$
via the automorphisms of $J$.

The pullback of $\cal C \to \tbinom{P}{j}$ along 
 $E_J(P) \to \tbinom{P}{j}$ coincides with the
 fiberwise spectrum with $\Sigma_j$-action
\begin{equation}\label{eqn:nottrivial}
E_J(P) \times\cal C_J \to E_J(P)\, ,
\end{equation}
where $\cal C_J := \Sigma^\infty C_J(N)$ is a spectrum with
$\Sigma_j$-action (recall that $C_J(N)$ is the total homotopy cofiber of the $j$-cube $E_\bullet(N)$; the action of $\Sigma_j$ arises from the evident action of $\Sigma_j$ on the cube).
Note that $\Sigma_j$ acts diagonally on 
$E_J(P) \times\cal C_J$. When considered unequivariantly,
\eqref{eqn:nottrivial} is a trivial fiberwise spectrum.

Then the obstruction $e'(x)$ may be interpreted as an element
of the equivariant cohomology group
\[
H^0_{\text{cs},\Sigma_j}(E_J(P); \cal C_J)\, ,
\]
or alternatively, as an element of the function space of compactly supported $\Sigma_j$-equivariant stable maps from $E_J(P)$ to 
$\cal C_J$.

\subsubsection{The homological invariant}
By Poincar\'e duality, there is an equivalence of spectra
\[
H^0_{\text{cs},\Sigma_j}(E_J(P);\cal C_J)
\cong
H_0^{\Sigma_j}(E_J(P); {}^{-\tau}\cal C_J)
\]
where ${}^{-\tau}\cal C_J$ is the twist of $\cal C_J$ by the inverse of the tangent bundle of $E_J(P)$ (the latter is just the restriction of the product of $j$-copies of the tangent bundle of $P$). 

\begin{defn}
\[
\mu(x) \in  H_0^{\Sigma_j}(E_J(P); {}^{-\tau}\cal C_J)
\]
be the class that corresponds to $e'(x)$ via the Poincar\'e duality isomorphism.
\end{defn}

 \begin{proof}[Proof of Theorem \ref{bigthm:embedding-sequence}]
  The procedure described above defines a function
 \[
\mu \: \pi_0(\bar E_{j-1}(P,N)) \to H_0^{\Sigma_j}(E_J(P); {}^{-\tau}\cal C_J)
 \]
such that $\mu(x) = 0$ when $x$ lifts to $\pi_0(\bar E_j(P,N))$
By Lemma \ref{lem:euclid} the converse is true
provided  $r\ge  p-1-(j-1)(n-p-2)$.
 \end{proof}

\section{Spaces of link maps \label{sec:linking}}

Given manifolds
 $P_1,\cdots, P_j$ of dimension $\dim P_i = p_i$ and a connected $n$-manifold $N$ without boundary,
a {\it link map} is a continuous map 
\[
f\:P_1 \amalg \cdots \amalg P_j \to N
\]
such that $f(P_i) \cap f(P_k) = \emptyset$ for  $i \ne k$. 
We will typically assume that $P_i$ is connected and boundaryless.
Set $\bold P:= \langle P_1,\dots,P_j\rangle$ and  write
\[
\cal L(\bold P,N)
\]
for the space link maps in the compact-open topology. 

Recall that $J =\{1,2,\cdots j\}$.
For a subset $S\subset J$, set 
\[
P_S := \coprod_{i\in S} P_i\quad \text{ and } \quad  P^{(S)} := \prod_{i\in S} P_i\, .
\]
Then to each $S\subset J$, we have a space
\[
\cal L^S(\bold P,N) 
\]
whose points are the maps 
\[
f\:P_{J} \to N
\]
such that $f(P_i) \cap f(P_k) = \emptyset$ for each pair of
distinct elements $i,k \in S$.
Note that $\cal L^{J}(\bold P,N) = \cal L(\bold P,N)$ is the space
of link maps and 
if $|S| \le 1$ then $\cal L^S(\bold P,N) = F(P_{J},N)$ is the  function space of 
maps with no constraint. The assignment
\[
S\mapsto \cal L^S(\bold P,N)
\]
is contravariant and defines a $j$-cube of spaces which we denote by
\[
\cal L^\bullet(\bold P,N)\, .
\]

\begin{rem}  \label{rem:equiv-defns} There is a related $j$-cube
\[
\cal L(\bold P_\bullet,N)
\]
whose value at $S\subset J$ is the space of link maps $f\: P_S\to N$. 
Then the evident map of $j$-cubes
\[
\cal L^\bullet(\bold P,N)\to \cal L(\bold P_\bullet,N)
\]
is $\infty$-cartesian because for each $S$ we have a homotopy fiber sequence
\[
F(P_{(J \setminus S)},N) \to \cal L^S(\bold P,N)\to \cal L(\bold P_S,N)\, ,
\]
and the $j$-cube $S\mapsto F(P_{(J \setminus S)},N)$ is $\infty$-cartesian if $j > 1$.
\end{rem}

\subsection{Homotopy coherent Brunnian links}
Henceforth we fix an embedding
\[
J \subset N
\]
and identify $J$ with its image.
Let $c\: \amalg_i P_i \to N$ be the link map which sends $P_i$ to $i$. Call $c$
the {\it trivial link map}. Then $c$ equips $\cal L^\bullet(\bold P,N)$ with the structure of
a $j$-cube of based spaces. If $n \ge 2$, then
the component of the basepoint is independent
of the choice of embedding $J \subset N$.

\begin{rem} Milnor \cite{Milnor} considers the case of link maps 
$f\: \amalg_{i=1}^j P_i \to N$
in euclidean space $N= \Bbb R^3$ in which each $P_i$ is a circle $S^{1}$. Milnor defines
$f$ to be  ``trivial'' if there is an extension of $f$ to a link map
$\amalg_i D^{2} \to \Bbb R^3$. Note that $f$ is trivial in Milnor's sense if and only if
$f$ is homotopic through link maps
to the trivial link map $c$.
\end{rem}

\begin{defn} The space of {\it homotopy coherent Brunnian link maps} 
\[
\cal B(\bold P,N)
\]
is the total homotopy fiber of the $j$-cube of based spaces $\cal L^\bullet(\bold P,N)$.
\end{defn}

\begin{rems} By Remark \ref{rem:equiv-defns}, an equivalent definition up to homotopy of  
$\cal B(\bold P,N)$ is given by taking the total homotopy fiber
of the $j$-cube $\cal L(\bold P_\bullet,N)$.

A point of
 $\cal B(\bold P,N)$ is given by data consisting of
  a link map $f\: P_{J} \to N$ together with a 
homotopy coherent set of rules which to each $S\subsetneq J$ associates
a path from the associated point of $\cal L^S(\bold P,N)$ to the basepoint. 

By contrast, Milnor \cite[\S5]{Milnor}  defines a link map 
$f\: \amalg_{i=1}^{j} S^{1} \to \Bbb R^3$ to be {\it almost trivial} if every proper sublink map of $f$ is trivial.\footnote{Subsequent authors call Milnor's 
notion of almost trivial link map
a Brunnian link map. The earliest reference employing this
language seems to be \cite{Debrunner}.} If $j \ge 4$ then this  notion of Brunnian
fails to be homotopy coherent.
Thus a homotopy coherent Brunnian link map  gives an almost trivial link map but 
not conversely. 

Note that there is an evident map 
\[
\cal B(\bold P,N) \to \text{fiber}(\cal L^J(\bold P,N) @>>> \prod_{i=1}^j \cal L^{J_i} (\bold P,N))\, ,
\]
where $J_i = J \setminus \{i\}$, $\bold P = \langle S^{1},\cdots, S^1\rangle$ and $N = \Bbb R^3$. However, if $j \ge 4$, this map is not a weak equivalence. Milnor's almost trivial link maps 
are those link maps whose components are in the image of the displayed homotopy fiber.
\end{rems}

\begin{term} As we only consider homotopy coherent Brunnian link maps in this paper, we henceforth refer to $\cal B(\bold P,N)$ 
simply as the space of {\it Brunnian link maps}, 
despite the different usage of this term in the literature. 
\end{term}

\subsection{The invariants}
For each $S\subset J$, one has a map
\begin{equation} \label{eqn:invariant}
\cal L^S(\bold P,N) \to F(P^{(J)}, E_S(N))\, ,
\end{equation}
where the target is the function space of maps 
$P^{(J)}\to E_S(N)$. One defines \eqref{eqn:invariant}
by mapping a link map $f$ to the map 
\[
(x_1,\dots,x_j)\, \mapsto\,  \prod_{i\in S}f(x_i)\, .
\]  

\begin{rem} When $S= J$, the map \eqref{eqn:invariant}
is   Koschorke's {\it $\kappa$-invariant}
 $\cal L(\bold P,N) \to F(P^{(J)}, E_J(N))$.
 \end{rem}

If we let $S$ vary, \eqref{eqn:invariant} defines a map of $j$-cubes of based spaces
\begin{equation} \label{eqn:invariant2}
\cal L^\bullet(\bold P,N)\to F(P^{(J)}, E_\bullet(N))\, .
\end{equation}

\begin{rem} For $S\subset J$, let $N^J(S)$ be the 
space of $j$-tuples $x \in N^J$ such that the image of $x$
under the projection $N^J \to N^S$ lies in the subspace
$E_S(N) \subset N^S$ (here $N^S := F(S,N)$). In other words, there is a pullback diagram
\[
\xymatrix{
N^J(S) \ar[r] \ar[d] & N^J \ar[d]\\
E_S(N) \ar[r] & N^{S}\, .
}
\]
The collection $\{N^J(S)\}_{S\subset J}$ forms both a stratification of $N^J$ as well
as a $j$-cube of based spaces. 

The operation $S\mapsto F(P^{(J)}, N^J(S))$ is a $j$-cube of based spaces which  we denote by $F(P^{(J)}, N^J(\bullet))$.
Then we have a commutative diagram of $j$-cubes
\begin{equation} \label{eqn:meta-diagram}
\xymatrix{
\cal L^\bullet(\bold P,N)\ar[r] \ar[d]  & F(P^{(J)}, N^J(\bullet)) \ar[d] \\
\cal L(\bold P_\bullet,N) \ar[r] &  F(P^{(J)}, E_\bullet(N)) 
}
\end{equation}
in which the vertical maps form $\infty$-cartesian $(j+1)$-cubes
(even more is true if $N$ happens to be contractible: in this case the vertical maps
are objectwise weak equivalences of $j$-cubes).
The map  \eqref{eqn:invariant2} is just the composition of the maps in diagram \eqref{eqn:meta-diagram}.

The top horizontal map of diagram \eqref{eqn:meta-diagram} can be viewed as 
a kind of {\it coassembly map} which records the
passage from global to local linking data.
More precisely, set $\bold J := \langle 1,2,\dots j\rangle$ where
we think of $i \in J$ as a manifold of dimension zero. Then by definition
\[
N^J(S) = \cal L^S (\bold J, N)\, ,
\]
and the top horizontal map of \eqref{eqn:meta-diagram}
associates to  $f\: \amalg_i P_i \to N$ the map which sends
a $j$-tuple $(x_1,\dots,x_j) \in P^{(J)}$ to the 
composed map $\amalg_i x_i \subset \amalg_i P_i \to N$.

One has a similar description of the bottom horizontal map
by reinterpreting the configuration space $E_S(N)$ as the space of link maps
$\cal L(\bold S,N)$.
\end{rem}

\begin{defn}\label{defn:phi-e}
Let 
\[
\Phi E_\bullet(N)
\] 
be the total homotopy fiber of the $j$-cube
$E_\bullet(N)$ taken with respect to the given embedding $J\to N$. 
(Alternatively, $\Phi E_\bullet(N) $ can be defined 
as the total homotopy
fiber of the $(j-1)$-cube $N_\bullet$ appearing in the proof of 
Lemma \ref{lem:cartesian-config}.)
\end{defn}

Then the map of $j$-cubes  \eqref{eqn:invariant2}
induces a map of total homotopy fibers
\begin{equation} \label{eqn:brunnian-invariant1}
\ell\:\cal B(\bold P,N) \to F(P^{(J)},\Phi E_\bullet(N))\, ,
\end{equation}
called the {\it higher unstable linking number map}.

\begin{rem}  Let $\cal O_{\bold P}$ be the partially ordered set
 given by $\bold U = \langle U_1,\dots,U_j\rangle$ in which
 $U_i$ is an open set in $P_i$,
and $\bold U \le \bold U'$ if and only if $U_i \subset U_i'$ for all $i$. Then 
\[
\bold U \mapsto \cal B(\bold U,N)
\]
defines a contravariant functor $\cal O_{\bold P}\to \cal T_\ast$. Its
multi-linearization in the sense of Weiss' manifold calculus
coincides up to homotopy with the higher 
unstable linking number map $\ell$ (cf.\ \cite{Munson-Volic}, \cite{Munson}).
\end{rem}

\begin{conjecture} \label{conj:brunnian} The 
 map $\ell$ (cf.~\eqref{eqn:brunnian-invariant1})
is $(1+\Sigma')$-connected, where 
\[
\Sigma' = \sum_i (n-2p_i-2)\, .
\]
\end{conjecture}

\begin{rem} The $j =2$ case of Conjecture \ref{conj:brunnian} is
known in the affirmative: it is the main result of \cite{GM}.
\end{rem}

 \subsubsection{The euclidean case; stabilization} 
 Assume $N = \Bbb R^n$. Then $\Phi(E_\bullet(\Bbb R^n))$
 coincides with the   
 total homotopy fiber
of the based $(j-1)$-cube 
\[
S\mapsto \Bbb R^n \setminus S
\]
for $S\subset J_1$ (cf.~the proof of Lemma \ref{lem:cartesian-config}).
 By this identification and  Corollary \ref{cor:important} applied to 
 $Q_i := \{i\} \subset \Bbb R^n$,
 we infer there is a $(j(n-2)+1)$-connected map 
 \begin{equation}\label{eqn:identify-T}
\Phi E_\bullet(\Bbb R^n) @>>> \prod_{i=1}^{(j-2)!}   
 \Omega^\infty S^{(j-1)(n-2)+1} \, ,
\end{equation}
Applying the functor $F(P^{(J)},{-})$
to \eqref{eqn:identify-T}, one obtains a map of function spaces
\begin{equation} \label{eqn:function-spaces}
F(P^{(J)}, \Phi E_\bullet(\Bbb R^n))
@>>> \prod_{i=1}^{(j-2)!}F^{\text{st}}(P^{(J)},S^{(j-1)(n-2)+1})
\end{equation}
which is $(1 + \Sigma)$-connected, where $\Sigma = \sum_{i=1}^j (n-p_i-2)$.
The composition of \eqref{eqn:brunnian-invariant1} with  
\eqref{eqn:function-spaces} defines the {\it higher stable linking number map}
\begin{equation}\label{eqn:gln}
\lambda\: \cal B(\bold P,\Bbb R^n) @>>> \prod_{i=1}^{(j-2)!}F^{\text{st}}(P^{(J)},S^{(j-1)(n-2)+1}) \, .
\end{equation}
A version of \eqref{eqn:gln} also appears in the work of Munson \cite{Munson}. Note that \cite[cor.~1.2]{Munson} gives a connectivity estimate one less than ours
(cf.~\cite[rem.~3.6]{Munson}).

\begin{ex} Let $n=j=3$ and $P_i = S^1$ for $i = 1,2,3$. Then the higher stable linking number map $\lambda$ is of the form
\[
\cal B(\bold P,\Bbb R^3) @>>> F^{\text{st}}((S^1)^{\times 3},S^3)\, .
\]
Taking  path components gives a function $\pi_0(\cal B(S^1_\bullet,\Bbb R^3))\to\Bbb Z$. This can be described as the rule which assigns
to a three component Brunnian link in $\Bbb R^3$ 
a certain Massey product in the link complement \cite{Porter}.
\end{ex}

Since $1+ \Sigma \ge 1 + \Sigma'$,
we infer that Conjecture \ref{conj:brunnian} with $N=\Bbb R^n$ is equivalent to the following.

\begin{conjecture} \label{conj:brunnian2} The  
higher stable linking number map $\lambda$ (cf.~\eqref{eqn:gln})
is $(1+\Sigma')$-connected.
\end{conjecture}

\subsection{Evidence for Conjecture \ref{conj:brunnian2}} 
In this subsection we prove Theorem \ref{bigthm:realization} 
which we submit as evidence for Conjecture \ref{conj:brunnian2}.  

As above,  $P_1,\dots, P_j$ are closed manifolds, but now we suppose
that each $P_i$ embeds in $\Bbb R^n$. In what follows, we don't
require the $P_i$ to be pairwise disjoint and
we will not need to assume that $P_1$ is 
a submanifold of $\Bbb R^n$.

Recall the fixed embedding $J \subset \Bbb R^n$.
Choose an  $n$-balls $B(i)$ containing $i\in J \setminus 1$
and assume that the collection $\{B(i)\}$ is pairwise disjoint.
Choose an embedding $P_i \subset B(i)$
for $i\ne 1$.
 Using the inclusions $B(i) \subset \Bbb R^n$, we obtain an embedding
\[
P_2\amalg \cdots \amalg P_j \subset \Bbb R^n\, .
\]

Consider the $(j-1)$-cube of function spaces
\[
S\mapsto F(P_1,\Bbb R^n\setminus P_S)\, , \qquad S\subset J_1\, .
\]
This is a based cube, where the basepoint
of $F(P_1,\Bbb R^n\setminus P_S)$ is the constant map having value
$1\in \Bbb R^n \setminus P_S$.
Consequently, the total homotopy fiber of this cube is  given by 
\begin{equation}\label{eqn:a-cube}
F(P_1,\Phi(\Bbb R^n \setminus P_\bullet)) 	\, ,
\end{equation}
where now the  convention  is that 
$\Bbb R^n \setminus P_\bullet$ is the $(j-1)$-cube given by
$\Bbb R^n \setminus P_S$ in which $S$ ranges through subsets of $J_1$.

For $S\subset J_1$, consider the commutative diagram
\[
\xymatrix{
F(P_1,\Bbb R^n\setminus P_S) \ar[r] \ar[d]_{a_S} & \cal L^{S\amalg 1}(\bold P,\Bbb R^n) \ar[r] \ar[d]^{b_S} & F(P^{(J)},E_{S\amalg 1}(\Bbb R^n)) \ar[d]^{c_S} \\
F(P_1,\Bbb R^n) \ar[r] & \cal L^{S}(\bold P,\Bbb R^n) \ar[r] & 
F(P^{(J)},E_{S}(\Bbb R^n))\, .
}
\]
As $S$ varies, each of the vertical maps assembles to a morphism of $(j-1)$-cubes, i.e,
each gives a $j$-cube, respectively $a_\bullet, b_\bullet, c_\bullet$.
The $j$-cube $b_\bullet$ is just
$\cal L^{\bullet}(\bold P,\Bbb R^n)$.
Similarly,  $c_\bullet$
is the $j$-cube $F(P^{(J)},E_{\bullet}(\Bbb R^n))$. 
If we consider $a_\bullet$ as a map of $(j-1)$-cubes, 
then its target is the constant $(j-1)$-cube on the contractible space $F(P_1,\Bbb R^n)$,
in particular the target of $a_\bullet$ is $\infty$-cartesian.  
Hence, the total homotopy fiber $\Phi(a_\bullet)$ is identified with the total homotopy fiber of the source of $a_\bullet$, and the latter coincides with 
$F(P_1,\Phi(\Bbb R^n \setminus P_\bullet))$, i.e., the source of 
the map \eqref{eqn:a-cube}.  Consequently, taking the total homotopy fibers of 
$a_\bullet,b_\bullet$ and
$c_\bullet$ and composing with the map \eqref{eqn:function-spaces}
results in a commutative diagram
\small
\begin{equation}\label{eqn:composite}
\xymatrix{
F(P_1,\Phi(\Bbb R^n \setminus P_\bullet)) \ar[r] & \cal B(\bold P,\Bbb R^n) \ar[r]^-\ell 
%\ar@/^2.0pc/[rr]^{\lambda}
\ar[dr]_-{\lambda}
& F(P^{(J)},\Phi E_\bullet(\Bbb R^n))\ar[d] \\
% \ar[r] 
 && F^{\text{st}}(P^{(J)},S^{(j-1)(n-2)+1})}
\end{equation}
\normalsize  
such that the right vertical map is $(1+\Sigma)$-connected
(cf.~\eqref{eqn:function-spaces}).

\begin{rem} In the above, we've neglected to mention
 that the map of cubes
$a_\bullet \to b_\bullet$
isn't basepoint preserving. This means that the map doesn't 
define a map of total homotopy fibers in an obvious way.

However, the map is easily seen to be basepoint preserving up to a preferred path
(the path is defined by the radial deformation retraction of each ball $B(i)$ onto its center $i$).
It is this preferred path that enables us to define the map from the total homotopy fiber
of $a_\bullet$  to the total homotopy fiber of $b_\bullet$, which is the leftmost
map in \eqref{eqn:composite}.
\end{rem}

\begin{cl} \label{cl:key} The horizontal composite
\begin{equation} \label{eqn:map-of-claim}
F(P_1,\Phi(\Bbb R^n \setminus P_\bullet))\to 
F(P^{(J)},\Phi E_\bullet(\Bbb R^n))
\end{equation}
of diagram \eqref{eqn:composite}
is $(1 - \hat p +\Sigma)$-connected. 
\end{cl}

The claim, proved below, gives evidence for the validity of
Conjecture \ref{conj:brunnian2}: it implies that $\ell$ 
 is a retraction on homotopy in degrees 
$\le 1 - \hat p + \Sigma$ (the same is true for $\lambda$ since 
the vertical map of \eqref{eqn:composite} is 
$(1+\Sigma)$-connected). Furthermore,
we have $1 - \hat p + \Sigma \ge 1+\Sigma'$, so $\lambda$ will be
a retraction in degrees $\le 1+\Sigma'$.
Consequently, the proof of Theorem \ref{bigthm:realization} 
has  been reduced to
verification of the claim.

\begin{proof}[Proof of Claim \ref{cl:key}]
For $S\subset J_1$, consider the pullback diagram
\[
\xymatrix{
\cal E_S \ar[r] \ar[d] & E_{S\amalg 1}(\Bbb R^n) \ar[d]\\
P^{(J_1)} \ar[r] & E_S(\Bbb R^n)
}
\]
where the right vertical map is given by projection and
the bottom horizontal map is projection $P^{(J_1)} \to P^{(S)}$
followed by the inclusion $P^{(S)}\subset E_S(\Bbb R^n)$.
Observe that the fiber of $\cal E_S\to P^{(J_1)}$
at a point $(x_2,\dots,x_j)$ is given by $\Bbb R^n\setminus \{x_i\}_{i\in S}$.

The map  $P^{(J_1)} \to E_S(\Bbb R^n)$ factors through the contractible space $B^{(J_1)} := 
\prod_i B(i)$, so the fibration $\cal E_S \to P^{(J_1)}$ is trivializable. Let $\Gamma(\cal E_S)$
be the space of sections of $\cal E_S\to P^{(J_1)}$.
Define a map 
\[
\Bbb R^n \setminus P_S \to\Gamma(\cal E_S)
\] 
by sending a point
$z\in \Bbb R^n \setminus P_S $ to the section given  by
$(x_2,\dots,x_j) \mapsto z$. This makes sense since $z$ also lies
in $\Bbb R^n\setminus \{x_i\}_{i\in S}$.

As $S$ varies we obtain a map of $J_1$-cubes
\begin{equation} \label{eqn:better-description-claim-map}
\Bbb R^n \setminus P_\bullet \to \Gamma(\cal E_\bullet)\, ,
\end{equation}
and applying the functor $F(P_1,{-})$ to 
the induced map of total homotopy fibers of 
\eqref{eqn:better-description-claim-map} yields the map of the claim. 

Hence, it suffices to prove that
 $\eqref{eqn:better-description-claim-map}$ is 
 $(1+\mu_2+\Sigma_2)$-cartesian 
 where
 \begin{equation} \label{eqn:mu-sigma}
\mu_2 := \min_{2\le i \le j} (n-p_i-2)\, , \qquad \Sigma_2 := \sum_{i = 2}^j (n-p_i-2)\, ,
\end{equation}
since $F(P_1,{-})$ reduces
connectivity by $p_1$ and 
\[
1+\mu_2 + \Sigma_2 - p_1 = 1 -\hat p+\Sigma\, .
\]
We will explain the proof when $2\le j \le 3$.
The remaining cases are analogous to the case $j=3$ and we will
leave them for the reader to verify. 

When $j=2$, it is readily checked that the statement to be proved amounts to the assertion that the map
\[
\Bbb R^n\setminus P_2 \to F(P_2,S^{n-1})
\]
given by 
\[
z\mapsto (x\mapsto \frac{x-z}{|x-z|})
\] is 
$(1+2(n-p_2-2))$-connected. This follows  from the commutative diagram
\[
\xymatrix{
\Bbb R^n\setminus P_2\ar[r]\ar[d] & F(P_2,S^{n-1}) \ar[d]\\
\Omega^\infty \Sigma^\infty (\Bbb R^n\setminus P_2) \ar[r] &
F^{\text{st}}(P_2,S^{n-1})\, ,
}
\]
where the left vertical map is $(1+2(n-p_2-2))$-connected by 
the Freudenthal suspension theorem, the right vertical map
is $(1-p+2(n-2))$-connected, also by the Freudenthal suspension theorem, and the lower horizontal map is a homotopy equivalence
by Spanier-Whitehead duality.

When $j=3$ one proceeds as follows:  we think of square $\Bbb R^n \setminus P_S$ for $S\subset \{2,3\}$ as defining an isotopy functor 
$\phi\: \cal O_{P_2} \times \cal O_{P_3} \to T_\ast$ which assigns
to an open set $U\subset P_2$ and an open set $V\subset P_3$, the total homotopy fiber of the  square
\[
\xymatrix{
U^* \cap V^* \ar[r]\ar[d] & V^\ast\ar[d] \\
U^\ast \ar[r] & \Bbb R^n\, ,
}
\]
where  $A^\ast$ denotes the complement of $A\subset \Bbb R^n$.
Similarly one has 
an isotopy functor
 $\phi^\sharp\: \cal O_{P_2} \times \cal O_{P_3} \to T_\ast$ be
associated with the total homotopy fiber of the square $S\mapsto \Gamma(\cal E_S)$. In fact, the latter is easy to identify: it is given by 
\[
(U,V)\mapsto F(U \times V, S^{n-1}\flat S^{n-1})\, ,
\]
where $S^{n-1}\flat S^{n-1}$ is the total homotopy fiber of the 
wedge square on $S^{n-1}$. The natural map
\begin{equation} \label{eqn:coassembly}
\phi(U,V) \to \phi^\sharp(U,V)
\end{equation}
is a kind of bilinearization (or coassembly) in the sense that 
\begin{itemize}
\item its value when $U$ and $V$ are open balls is a 
homotopy equivalence;
\item $\phi^\sharp(U,V)$ is linear in each variable in the sense of isotopy calculus. 
\end{itemize}
Furthermore, \eqref{eqn:coassembly} is initial
with respect to these properties.  On the other hand,  Corollary 
\ref{cor:total-fiber}  (cf.~Corollary \ref{cor:important} and Remark \ref{rem:functoriality})
defines a natural transformation
\begin{equation} \label{eqn:natural-transform}
\phi(U,V) \to \Omega^\infty\Sigma^\infty (S^{-1}\smsh U^\ast \smsh V^\ast)
\end{equation}
whose connectivity can be described as follows:
if $U$ is a tubular neighborhood of a closed manifold of dimension $k_1$ and $V$ is a tubular neighborhood of a closed manifold of dimension $k_2$, then \eqref{eqn:natural-transform}
is $(1 +\min{(n-k_1-2,n-k_2-2)} + \sum (n-k_i-2))$-connected.
In particular, it is $(3n-5)$-connected when $U$ and $V$ are balls.

The functor $(U,V) \mapsto \Omega^\infty (S^{-1} \smsh U^\ast \smsh V^\ast)$
is also bilinear. In fact, by Spanier-Whitehead duality 
it is is naturally equivalent to the functor $\psi$ given by
\[
(U,V) \mapsto  F(U \times V, \Omega^\infty\Sigma^\infty(S^{2n-3}))\, .
\]
As $\phi\to\phi^\sharp$ is initial in the homotopy category of functors,
there is a natural transformation 
\begin{equation}\label{eqn:factorize-nt}
\phi^\sharp\to \psi
\end{equation}
that yields a factorization 
 $\phi\to \phi^\sharp\to \psi$. Clearly,
  \eqref{eqn:factorize-nt} is induced by a  map
of spaces $S^{n-1}\flat S^{n-1} \to \Omega^\infty\Sigma^\infty(S^{2n-3})$.
Furthermore, it is automatic that the map
 $\phi^\sharp(U,V)\to \psi(U,V)$
is  $(3n-5)$-connected when $U$ and $V$ are balls.

It follows that the map
$\phi^\sharp(P_2,P_3) \to \psi(P_2,P_3)$ is $(3n-5-p_2-p_3)$-connected. As
$3n-5-p_2-p_3$ is strictly larger than $1+\mu_2+\Sigma_2$, it
follows that the map $\phi(P_2,P_3) \to \phi^\sharp(P_2,P_3)$ is
$(1+\mu_2+\Sigma_2)$-connected, as was to be shown. \end{proof}

\begin{ex} Let $\bold P = \langle S^1,\dots, S^1\rangle$ be an ordered
$j$-tuple of circles and
let $n =3$. By Theorem \ref{bigthm:realization},
\[
\pi_0(\lambda)\: \pi_0(\cal B(\bold P,\Bbb R^3)) \to \prod_{i=1}^{(j-2)!} \Bbb Z
\]
is surjective.
We conjecture that  $\pi_0(\lambda)$
coincides with Milnor's $\mu$-invariants \cite[\S5]{Milnor}
on the set of (classical) Brunnian link maps.
\end{ex}  

\subsection{Postscript: the two
component case.} When $j=2$ there is some additional evidence 
for Conjecture \ref{conj:brunnian2} with the numerical improvements suggested by 
Theorem \ref{bigthm:realization}. Let $\bold P = \langle P,Q\rangle$, with   $p := \dim P$ and $q:= \dim Q$. In this situation 
$\lambda$ is the classical stable linking pairing
\begin{equation} \label{eqn:linking-number-classical}
\cal L(\langle P,Q\rangle,\Bbb R^n) \to F^{\text{st}}(P\times Q,S^{n-1})\, ,
\end{equation}
which associates to a link map $f\amalg g\: P\amalg Q \to \Bbb R^n$ the map
\[
(x,y) \mapsto \frac{f(x) - g(y)}{|f(x)-g(y)|}\, .
\]
On path components the above gives a function of pointed sets
\begin{equation}
\alpha\: \pi_0(\cal L(\langle P,Q\rangle,\Bbb R^n)) \to \{P_+\smsh Q_+,S^{n-1}\}\, ,
\end{equation}
where we have identified the set of path components of
$F^{\text{st}}(P\times Q,S^{n-1})$ with the abelian group of stable homotopy classes of based maps
$P_+\smsh Q_+\to S^{n-1}$.

Suppose $A$ and $B$ are pointed sets. We denote the basepoint in each case by $\ast$.
A basepoint preserving map $h\: A\to B$ is said to be {\it weakly injective} if there 
are no nontrivial solutions to the equation 
$h(x) = \ast$. If $h$ is a homomorphism of groups, then weak injectivity implies
injectivity (compare \cite[lem.~1.1]{Habegger-Kaiser}).

\begin{prop} \label{prop:Good-Munson-improved} Assume that $Q\subset \Bbb R^n$ is a submanifold of codimension $\ge 3$.
Then the function $\alpha$ is a surjection on path components
if $2n-2q-p-3 \ge 0$. Furthermore, if $2n-2q-p-3 > 0$ then
$\alpha$ is weakly injective.
\end{prop}

\begin{rems} (1). Proposition \ref{prop:Good-Munson-improved} gives a better estimate than
\cite{GM}, but at the expense of an additional hypothesis on $Q$.
\smallskip

\noindent (2). The number $2n-2q-p-3$ may be rewritten in the form $1-q + \Sigma$, where $\Sigma = (n-p-2) + (n-q-2)$. This is the  number of  Theorem \ref{bigthm:realization} when $j=2$.
Hence,  only  weak injectivity needs to be verified.
\smallskip

\noindent (3). Proposition 
\ref{prop:Good-Munson-improved} 
suggests that the connectivity estimate of Conjecture \ref{conj:brunnian2} might be improved to 
$1-\hat p + \Sigma$ under the additional assumption that
$P_2,\dots,P_j \subset \Bbb R^n$ are submanifolds of codimension $\ge 3$.
\smallskip

\noindent (4). Proposition \ref{prop:Good-Munson-improved} delivers more information
in the spherical case  $P = S^p$ and $Q= S^q$ with $q\le n-3$. Then
 $\pi_0(\cal L(\langle S^p,S^q\rangle,\Bbb R^n))$ possesses a group structure 
 (see \cite{Scott}, \cite[p.~765]{Koschorke3})  and the function $\alpha$ becomes a homomorphism. Consequently,
 weak injectivity implies injectivity and we recover \cite[p.~190]{Scott}.
 We infer that Proposition \ref{prop:Good-Munson-improved} implies that
  $\alpha$ is an isomorphism
 when $2n-2q-p-3 > 0$. According to \cite[thm.~1.1]{Habegger-Kaiser}, in the spherical case
 $\alpha$ is actually 
 an isomorphism if $3n - 2q - 2p - 4> 0$ and $p, q \ge 1$.
\end{rems}

\begin{proof}[Proof of 
 Proposition \ref{prop:Good-Munson-improved}]
 As pointed out above, we only need to verify the last part of the statement.
 Let 
\[
x:= f\amalg g \in \cal L(\langle P,Q\rangle),\Bbb R^n)
\]
be any point. We can assume without loss in generality that  $f\: P \to \Bbb R^n$ is a smooth map.
We first show how to find a path in $\cal L(\langle P,Q\rangle,\Bbb R^n)$ from $x$ to $x' = (f,h)$
in which $h$ is a smooth embedding. 
It then suffices to prove that if the stable linking number of $x'$ is trivial then
the map $f\: P \to \Bbb R^n\setminus h(Q)$ is null-homotopic.

Consider the commutative square
\[
\xymatrix{
E(Q,\Bbb R^n \setminus f(P)) \ar[r] \ar[d] & F(Q,\Bbb R^n \setminus f(P))\ar[d]\\
E(Q,\Bbb R^n) \ar[r] & F(Q,\Bbb R^n)\, ,
}
\]
in which $E({-},{-})$ denotes the space of embeddings. By Lemma \ref{lem:GK-improved} below, 
%A slightly more general version of \cite[th.~E]{GK2} 
the  square is $(2n-2q-p-3)$-cartesian.
If particular, if we use the preferred basepoint of $E(Q,\Bbb R^n)$, it follows that
when $2n-2q-p-3\ge 0$, we
can  find an isotopy of the submanifold $Q\subset \Bbb R^n$  to an embedding 
$h\: Q\to \Bbb R^n \setminus f(P)$ such that the underlying map of this embedding is homotopic
to the map $g\: Q\to \Bbb R^n \setminus f(P)$. Then  $x' = (f,h)\in \cal L(\langle P,Q\rangle,\Bbb R^n)$ 
is in the same path component as $x$. 

But as we've seen above, the composition
\[
F(P,\Bbb R^n \setminus h(Q)) \to \cal L(\langle P,Q\rangle,\Bbb R^n) \to 
F^{\text{st}}(P\times Q,S^{n-1})
\]
is $(2n-2q-p-3)$-connected. In particular, if $2n-2q-p-3 > 0$ then
the triviality of the stable linking number of
$x'$ implies that the map $P \to 
\Bbb R^n \setminus h(Q)$ is null-homotopic.
\end{proof}

The following result was used in the proof of Proposition \ref{prop:Good-Munson-improved}:

\begin{lem} \label{lem:GK-improved} 
Assume $N$ is a connected smooth $n$-manifold, and let $P$ and $Q$ be closed smooth manifolds
of dimensions $p$ and $q$. Assume $q \le n-3$.
Let $f\: P \to N$ be a smooth map. 
Then the square
\[
\xymatrix{
E(Q,N \setminus f(P)) \ar[r] \ar[d] & F(Q,N \setminus f(P))\ar[d]\\
E(Q,N) \ar[r] & F(Q,N)
}
\]
is $(2n-2q-p-3)$-cartesian.
\end{lem} 

\begin{rem} When $f$ is an embedding, this amounts to the $j=2$ case of 
\cite[th.~E]{GK2}.
\end{rem}

\begin{proof} (Sketch). The argument was communicated to us by Tom Goodwillie.
If we replace embeddings with immersions, then the analogous diagram
is $\infty$-cartesian by Smale-Hirsch theory (in this instance we only need to assume $q\le n-1$).  Hence it suffices to show that the square
\[
\xymatrix{
E(Q,N \setminus f(P)) \ar[r] \ar[d] & I(Q,N \setminus f(P))\ar[d]\\
E(Q,N) \ar[r] & I(Q,N)\, ,
}
\]
is $(2n-2q-p-3)$-cartesian, where $I({-},{-})$ denotes the space of immersions.

The proof then proceeds by comparing the homotopy fibers of the horizontal maps of the square. The map  $N\setminus f(P) \to N$ is $(n-p-1)$-connected by transversality.
If $q\le n-3$, then the Goodwillie-Weiss embedding calculus
applied to the embedding spaces $E(Q,N \setminus f(P))$ and $E(Q,N)$ give towers for these homotopy fibers, where the first non-trivial layer is in degree $j \ge 2$. 
The homotopy theoretic model for these layers provided by 
 \cite{Weiss} implies that the map of the $j$-th layers is  
$(2n-2q-p-3)$-connected for all $j$. The conclusion then follows from the five lemma.
\end{proof}

\end{document}